\newcommand{\Z}{\ensuremath{\mathbb{Z}}}
\newcommand{\R}{\ensuremath{\mathbb{R}}}
\newcommand{\C}{\ensuremath{\mathbb{C}}}
\DeclareMathOperator{\card}{Card}
\newcommand{\fracz}[2]{\genfrac{}{}{0pt}{}{#1}{#2}}
\newcommand{\eps}{\varepsilon}
\def\a{\mathrm{a}}
\def\b{\mathrm{b}}
\newtheorem{thm}{Theorem}[section]
\newtheorem{cor}[thm]{Corollary}
\newtheorem{lem}[thm]{Lemma}
\newtheorem{prop}[thm]{Proposition}
\newtheorem{rem}[thm]{Remark}
\title[Upper bounds for the number of zeroes]{Upper bounds for the number of
zeroes for some Abelian integrals}
\author[A. Gasull]{Armengol Gasull}
\address{Departament de Matem\`{a}tiques,
Universitat Aut\`{o}noma de Barcelona, 08193 Bellaterra, Barcelona,
Spain} \email{gasull@mat.uab.cat}
\author[J.~T. L\'azaro]{J.~Tom\'as L\'azaro}
\address{Departament de Matem\`atica Aplicada I, Universitat Polit\`ecnica de Catalunya, Barcelona, Spain}
\email{jose.tomas.lazaro@upc.edu}
\author[J.~Torregrosa]{Joan Torregrosa}
\address{Departament de Matem\`{a}tiques,
Universitat Aut\`{o}noma de Barcelona, 08193 Bellaterra, Barcelona,
Spain} \email{torre@mat.uab.cat}
\thanks{The first and third authors are partially supported by
the MICIIN/FEDER grant number MTM2008-03437 and by the Generalitat de
Catalunya grant number 2009SGR410. The second author is partially
 supported by the MICIIN/FEDER grant number MTM2009-06973 and by the
 Generalitat de Catalunya grant number 2009SGR859.}
\subjclass[2010]{Primary: 34C08. Secondary: 34C07, 34C23, 37C27,
41A50}
\keywords{Abelian integrals, Weak 16th Hilbert's Problem, limit
cycles, Chebyshev system, number of zeroes of real functions}
\begin{document}

\begin{abstract}
Consider the vector field $x'= -yG(x, y), y'=xG(x, y),$ where the
set of critical points $\{G(x, y) = 0\}$ is formed by $K$ straight
lines, not passing through the origin and parallel to one or two
orthogonal directions. We  perturb it with a general polynomial
perturbation of degree $n$ and study which is the maximum number of
limit cycles that can bifurcate from the period annulus of the
origin in terms of $K$ and $n.$ Our approach is based on the
explicit computation of the Abelian integral that controls the
bifurcation and in a new result for bounding the number of zeroes of
a certain family of real functions. When we apply our results for
$K\le4$ we recover or improve some results obtained in several
previous works.

\end{abstract}

\maketitle

\section{Introduction} \label{se:1}

The problem of determining the number of limit cycles bifurcating
from the period annulus of a system
\begin{equation}\label{eq:1} 
\left\{
\begin{array}{rcl}
 \dot{x} &=& -yG(x,y) + \eps \,P(x,y), \\ [1.1ex]
 \dot{y} &=& \phantom{-} xG(x,y) + \eps \,Q(x,y), \\
\end{array}
\right.
\end{equation}
where $P(x,y),$ $Q(x,y)$ are polynomials of a given degree, $G(x,y)$
satisfies $G(0,0) \neq 0$ and $\eps$ is a small parameter, has been
widely studied (see for instance
\cite{BuiLli2007,ColGasPro2005,
GasProTor2008,GinLli2007,LiLliZha1995,LliPerRod2001,
XiaHan2004a,XiaHan2004b}).
Among this type of systems we will be concerned with those having
\begin{equation}\label{eq:2}
 G(x,y)=\prod_{j=1}^{K_1} (x-\a_j) \prod_{\ell=1}^{K_2}
 (y-\b_{\ell}),
\end{equation}
where $\a_j$ and $\b_\ell$ are real numbers with $\a_i\ne \a_j $ and
$\b_i\ne \b_j$ for $i\ne j.$ The unperturbed system ($\varepsilon=0$) presents 
a centre at the origin and any line $x=\a_j$ or
$y=\b_{\ell}$ constitutes an invariant set of singular points of the
system. This invariant set is formed by parallel and/or orthogonal
invariant lines.

The aim of this work is to provide, for small values of $\eps,$
upper bounds for the number of limit cycles bifurcating from
periodic orbits of the unperturbed system in the period annulus
\[
\mathcal{D}= \left\{(x,y)\in \R^2 \ \vert \ 0 < \sqrt{x^2+y^2} < \rho
:=\min_{j,{\ell}} \left\{ |\a_j|,|\b_{\ell}|\right\} \right\}.
\]
Several previous works handle this problem for different particular
choices of small values of $K_1$ and $K_2.$ The following cases have been
studied:  one line in~\cite{LliPerRod2001}; two parallel
lines in~\cite{XiaHan2004b}; two orthogonal lines
in~\cite{BuiLli2007}; three lines, two of them parallel and one
perpendicular in~\cite{ColLliPro2010}; and four lines with a special
configuration in~\cite{AtaNyaZan2009}. Other related works are
\cite{GinLli2007}, with $G(x,y)$ any quadratic polynomial;
\cite{XiaHan2004a}, one multiple singular line; or
\cite{GasProTor2008} for $K$ isolated singular points.

As it is standard in this type of problems, we rewrite our
system~\eqref{eq:1} in the equivalent form on
$\mathcal{D},$
\begin{equation} \label{eq:3}
\left\{
\begin{array}{rcr}
 \dot{x} &=& -y + \eps \, P(x,y)/G(x,y), \\
 \dot{y} &=&  x + \eps \, Q(x,y)/G(x,y),
\end{array}
\right.
\end{equation}
where $G(x,y)$ is defined in~\eqref{eq:2}. Let us denote
by $\gamma_r= \left\{ (x,y)\ \vert \ x^2+y^2=r^2\right\},$ with
$0<r<\rho,$ any periodic orbit of the unperturbed system. It is well
known that the associated \emph{return map}, whose isolated zeroes
give rise to limit cycles, is $\Pi(r,\eps)=r+I(r) \eps +
\mathcal{O}(\eps^2)$ where $I(r)$ is the Abelian integral
\begin{equation} \label{eq:4}
I(r)= \int_{\gamma_r} \dfrac{Q(x,y) \, dx - P(x,y) \, dy}{G(x,y)}.
\end{equation}
This integral is the so-called (first-order)
Poincar\'e-Melnikov-Pontryagin function. The return map
$\Pi(r,\eps)$ is analytic in $r$ ranging on any compact subset of
$(0,\rho)$ for $\eps$ small enough. It is known (see for instance~\cite{ChrLi2007}) that,
provided $I(r)$ does not vanish exactly, the number of zeroes of
$\Pi(r,\eps),$ for $\eps$ small enough, is at most the number of
zeroes of $I(r)$ taking into account their multiplicity. The problem of
estimating this number in terms of the involved degrees is commonly
called \emph{the weakened Hilbert's 16th Problem}.

The goal of this paper is to provide an upper bound for the number of
zeroes of the Abelian integral 
associated to a system of the form~\eqref{eq:1},
depending on the number of critical straight lines and the degree
$n$ of the perturbative polynomials $P(x,y)$ and $Q(x,y).$ We prove:
\begin{thm}\label{thm:1.1}
Consider a system of the
form~\eqref{eq:1},
\[
\left\{
\begin{array}{rcr}
 \dot{x} &=& -yG(x,y) + \eps \,P(x,y), \\ [1.2ex]
 \dot{y} &=&  xG(x,y) + \eps \,Q(x,y), \\
\end{array}
\right.
\]
where
\[
 G(x,y)=\prod_{j=1}^{K_1} (x-\a_j) \prod_{{\ell}=1}^{K_2} (y-\b_{\ell}),
\]
$P(x,y),$ $Q(x,y)$ are polynomials of degree $n,$ $\a_j$ and $\b_\ell$ are
real numbers with $\a_i\ne \a_j $ and $\b_i\ne \b_j$ for $i\ne j,$
  $\eps$ is a small parameter and $K_1\ge K_2\ge 0.$ Moreover, when
  $K_2=0,$  $K_1\ge1$ and $\prod_{{\ell}=1}^{0} (y-\b_{\ell}):=1.$

  Let $I(r)$ be its associated Abelian integral defined in
  \eqref{eq:4}.
Then, the number of real zeroes of $I(r)$ in
$(0,\min\{|\a_j|,|\b_{\ell}|\}),$  counting their multiplicities,
$\mathcal {Z}(I),$ satisfies
\[
\mathcal{Z}(I)\leq
\begin{cases}
\widetilde{K}_1 \left[ \dfrac{n+3}{2} \right] + \left[ \dfrac{n}{2} \right],
&K_2=0,\\[0.2cm]
(\widetilde{K}_1+\widetilde{K}_2)\left( \left[ \dfrac{n+2}{2}
\right]+L\right)+\left[\dfrac{n-1}{2} \right] +L,&K_2\ge1,
\end{cases}\]
where
\begin{align*}
\widetilde{K}_1&=\card\{|\a_1|,\ldots,|\a_{K_1}|\}\le K_1,\\
\widetilde{K}_2&=\card\{|\a_1|,\ldots,|\a_{K_1}|,|\b_1|,\ldots,|\b_{K_2}|\}
-\widetilde{K}_1\le K_2,\\
L&=\card\{\a_j^2+\b_\ell^2,\,{j=1,\ldots,K_1,\,
\ell=1,\ldots,K_2}  \}\le K_1K_2
\end{align*}
and $[s]$ denotes the integer part of $s.$
\end{thm}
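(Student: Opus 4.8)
The plan is to reduce $I(r)$ to an explicit linear combination of a controlled family of functions of $r$, and then bound the number of zeroes of such a combination.

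First I would parametrize $\gamma_r$ by $x=r\cos\theta$, $y=r\sin\theta$. Writing $P=\sum p_{ij}x^iy^j$, $Q=\sum q_{ij}x^iy^j$ and using $dx=-r\sin\theta\,d\theta$, $dy=r\cos\theta\,d\theta$, the form $Q\,dx-P\,dy$ becomes a trigonometric polynomial of degree $n+1$ whose coefficients are monomials in $r$, so that
\[
I(r)=\int_0^{2\pi}\frac{N(r,\theta)}{\prod_{j=1}^{K_1}(r\cos\theta-\a_j)\prod_{\ell=1}^{K_2}(r\sin\theta-\b_\ell)}\,d\theta,
\]
with $N(r,\theta)=\sum_{i+j\le n+1}c_{ij}\,r^{i+j}\cos^i\theta\sin^j\theta$ and the $c_{ij}$ linear in the coefficients of $P$ and $Q$. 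Since the $x$- and $y$-factors of $G$ separate, partial fractions give $1/G=\bigl(\sum_j A_j/(r\cos\theta-\a_j)\bigr)\bigl(\sum_\ell B_\ell/(r\sin\theta-\b_\ell)\bigr)$, and after reducing the powers of $\cos\theta,\sin\theta$ by polynomial division against each linear factor, $I(r)$ becomes a combination of the elementary integrals
\[
\int_0^{2\pi}\cos^p\theta\sin^q\theta\,d\theta,\quad \int_0^{2\pi}\frac{\cos^p\theta\sin^q\theta}{r\cos\theta-\a_j}\,d\theta,\quad \int_0^{2\pi}\frac{\cos^p\theta\sin^q\theta}{r\sin\theta-\b_\ell}\,d\theta,
\]
together with the coupled integrals $\int_0^{2\pi}\cos^p\theta\sin^q\theta/\bigl((r\cos\theta-\a_j)(r\sin\theta-\b_\ell)\bigr)\,d\theta$.

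Each of these I would evaluate by residues after the substitution $z=e^{i\theta}$. Because $0<r<\min\{|\a_j|,|\b_\ell|\}$, each quadratic $rz^2-2\a_j z+r$ and $rz^2-2i\b_\ell z-r$ has exactly one root inside the unit circle, and collecting residues yields three types of output: pure powers $r^k$ from the first integral; functions $p(r)/\sqrt{\a_j^2-r^2}$ and $p(r)/\sqrt{\b_\ell^2-r^2}$ from the single-factor integrals; and — this is the key computation — functions involving $1/\sqrt{\a_j^2+\b_\ell^2-r^2}$ from the coupled integrals, the combination $\a_j^2+\b_\ell^2-r^2$ arising as the squared modulus of the denominator of the residue. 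This last point is what forces the parameter $L$ into the statement.

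I would then assemble the result. The radicals $\sqrt{\a_j^2-r^2}$ depend only on $|\a_j|$, so they contribute $\widetilde{K}_1$ distinct functions; the $\b$-radicals add only those distinct absolute values $|\b_\ell|$ not already equal to some $|\a_j|$, giving $\widetilde{K}_2$ more; and the coupled radicals $\sqrt{\a_j^2+\b_\ell^2-r^2}$ contribute $L$ distinct functions, which is exactly how $\widetilde{K}_1,\widetilde{K}_2,L$ enter. Parity arguments (invariance of the integrals under $\theta\mapsto-\theta$ and $\theta\mapsto\pi-\theta$) annihilate every monomial that is not even in both $\cos\theta$ and $\sin\theta$ and fix the parity of the surviving powers of $r$; this is the source of the integer-part expressions $[\,\cdot\,]$ and it bounds the degree of each polynomial $p(r)$ multiplying a radical, separately in the cases $K_2=0$ and $K_2\ge1$. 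The final step is to bound the number of zeroes of a real combination $p_0(r)+\sum_i p_i(r)/\sqrt{w_i-r^2}$ with distinct positive weights $w_i\in\{\a_j^2,\b_\ell^2,\a_j^2+\b_\ell^2\}$ and degrees $\deg p_i$ controlled by $n$. Proving that such a family is an extended complete Chebyshev system on $(0,\rho)$ — equivalently, establishing the new bounding lemma announced in the abstract — is the main obstacle; once it is available, $\mathcal{Z}(I)$ is at most the dimension of the family minus one, and carrying out the dimension count in each case yields precisely the two stated bounds.
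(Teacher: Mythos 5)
Your skeleton (polar coordinates, partial fractions against the factors of $G$, residue evaluation, then a Chebyshev-type bound for a ``polynomial plus radicals'' family) is the same as the paper's, but your key computational claim is false, and the error propagates into the final count. The coupled integrals do \emph{not} produce terms in $1/\sqrt{\a_j^2+\b_\ell^2-r^2}$. Carrying out the residue computation for
\begin{equation*}
\int_0^{2\pi}\frac{d\theta}{(r\cos\theta-\a)(r\sin\theta-\b)},
\end{equation*}
the pole inside the unit circle coming from the cosine factor is $z_1=(\a-\sqrt{\a^2-r^2})/r$, and its residue is $1/\bigl(\sqrt{\a^2-r^2}\,(\sqrt{\a^2-r^2}+i\b)\bigr)$. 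Rationalizing the factor $\sqrt{\a^2-r^2}+i\b$ is where the squared modulus appears: it yields $(\a^2-r^2)+\b^2=\a^2+\b^2-r^2$ as a \emph{polynomial} factor in the denominator, while the surviving radical is still $\sqrt{\a^2-r^2}$. The integral equals $\frac{2\pi}{\a^2+\b^2-r^2}\bigl(\frac{c_1}{\sqrt{\a^2-r^2}}+\frac{c_2}{\sqrt{\b^2-r^2}}\bigr)$; no new square root can arise, since every residue is a rational function of $z_1$ (hence of $\sqrt{\a^2-r^2}$) or of the analogous root for the sine factor. This changes how $L$ must enter the argument. Because the factors $\a_j^2+\b_k^2-r^2$ are rational and strictly positive on $(0,\rho)$, the correct route (the paper's) is to multiply $I(r)$ by $H(r^2)=\prod(\a_{j_i}^2+\b_{\ell_i}^2-r^2)$ over the $L$ distinct values, which preserves the zero count and inflates every polynomial degree by $L$, giving $(\widetilde{K}_1+\widetilde{K}_2)\bigl(\bigl[\frac{n+2}{2}\bigr]+L\bigr)+\bigl[\frac{n-1}{2}\bigr]+L$ after applying the bounding theorem to $\widetilde{K}_1+\widetilde{K}_2$ radicals. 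Your structure instead would feed the bounding theorem $\widetilde{K}_1+\widetilde{K}_2+L$ radicals each carrying a polynomial of degree roughly $[n/2]+1$, producing a bound in which $L$ \emph{multiplies} the $n$-dependent term rather than adding to the degrees; that is not the stated bound, so your final ``dimension count'' cannot be carried out as claimed.

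Two smaller problems. First, your parity argument is wrong for the coupled integrals: $(r\cos\theta-\a)(r\sin\theta-\b)$ is invariant under neither $\theta\mapsto-\theta$ nor $\theta\mapsto\pi-\theta$, so monomials odd in $\sin\theta$ do \emph{not} vanish there (the paper's Lemma~\ref{le:2.2} must treat them explicitly in its induction); even in the case $K_2=0$, only oddness in $\sin\theta$ kills a term, not oddness in $\cos\theta$, and the degree bookkeeping giving the integer parts comes from an induction on the degree of the numerator, not from symmetry alone. Second, you defer the bounding result as an unproven ``main obstacle''; invoking it is legitimate (it is the paper's Theorem~\ref{thm:1.2}, proved there by a derivation--division argument, not by exhibiting an ECT system --- the bound coincides with ``dimension minus one'' only when all the polynomials attached to radicals have equal degree), but as your reduction stands you would be applying it to the wrong family of functions anyway.
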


Note that the symmetric situation, $K_2\ge K_1,$ evolves in a
completely similar way changing $(x,y)$ by $(y,x).$

Recall that using this approach we know that the total number 
of limit cycles (counting their multiplicities) of
system~\eqref{eq:1} which bifurcate from its periodic
orbits  is bounded by the maximum number of isolated zeroes
(counting their multiplicities) of $I(r)$ for
$0<r<\rho=\min\{|\a_j|,|\b_{\ell}|\}.$

The proof of Theorem~\ref{thm:1.1} is  based on
two steps: a first one where the corresponding Abelian integral
\eqref{eq:4} is explicitly computed and a second
one where an upper estimate on the number of its zeroes is
provided. This is done in Section~\ref{se:2} and \ref{se:4},
respectively. This second part is supported on the following result,
proved in Section~\ref{se:3}, that we believe it is interesting by
itself.
\begin{thm}\label{thm:1.2}
Consider a function of the form
\begin{equation}\label{eq:5}
F(x) = P^0(x)+\sum\limits_{j=1}^KP^j(x)\frac{1}{\sqrt{x+c_j}},
\end{equation}
where  $P^j(x),$ $j=0,\ldots,K,$ are real polynomials and $c_j,
j=1,\ldots,K,$ are real constants. Then its number of real zeroes,
taking into account their multiplicities, $\mathcal{Z}(F),$
satisfies
\begin{equation}\label{eq:6}
\mathcal{Z}(F) \leq
K\left(\max_{j=1,\ldots,K}\left(\deg(P^j)\right)+1\right)+\deg(P^0).
\end{equation}
Here $\deg(0)=-1.$
\end{thm}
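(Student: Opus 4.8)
The plan is to reduce everything to the elementary principle that differentiation destroys at most one zero. I would first record the \emph{derivation--division lemma}: if $g$ is $C^1$ and nonvanishing and $f$ is $C^1$ on an interval $I$, then $f$ and $f/g$ have the same zeroes with the same multiplicities, so by Rolle's theorem counted with multiplicity (between two zeroes of a function lies a zero of its derivative, and a zero of multiplicity $\mu$ becomes one of multiplicity $\mu-1$) one gets $\mathcal{Z}(f)\le \mathcal{Z}\!\left((f/g)'\right)+1$. Thus each operation ``multiply by a nonvanishing factor, then differentiate'' costs at most one zero. All the work then consists of choosing such operations that peel $F$ down to an object whose zeroes are obvious, while keeping careful track of the budget. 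I would work on an interval $I$ where every $x+c_j>0$, so that each $\sqrt{x+c_j}$ is smooth and positive, which is the situation relevant to \eqref{eq:4}.

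The first and cleanest phase removes the polynomial summand $P^0$. A direct computation shows that the class of functions $R_0(x)+\sum_{j=1}^K S_j(x)(x+c_j)^{-m_j-1/2}$ (one inverse half-power attached to each distinct branch point, \emph{no} mixed radicals) is stable under $\tfrac{d}{dx}$: one finds $\tfrac{d}{dx}\!\left[S_j(x)(x+c_j)^{-m_j-1/2}\right]=\left[(x+c_j)S_j'-(m_j+\tfrac12)S_j\right](x+c_j)^{-m_j-3/2}$, and since the leading coefficient $(\deg S_j-m_j-\tfrac12)$ never vanishes, \emph{the degree of each coefficient is preserved} while $m_j$ increases by one and $\deg R_0$ drops by one. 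Applying Rolle $\deg P^0+1$ times therefore kills $P^0$ at a cost of $\deg P^0+1=d_0+1$ zeroes and leaves
\[
F^{(d_0+1)}(x)=\sum_{j=1}^K Q_j(x)(x+c_j)^{-p-1/2},\qquad p=d_0+1,\ \deg Q_j=\deg P^j\le d .
\]
It remains to prove the $p$-independent bound $\mathcal{Z}\!\left(F^{(d_0+1)}\right)\le K(d+1)-1$, which combined with the above yields exactly $\mathcal{Z}(F)\le K(d+1)+d_0$.

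The heart of the matter is this last estimate, which is really the statement that the $K(d+1)$ functions $\{\,x^i(x+c_j)^{-p-1/2}:0\le i\le d,\ 1\le j\le K\,\}$ form an \emph{extended complete Chebyshev system} on $I$: any nonzero combination has at most (number of functions)$-1$ zeroes counting multiplicity. I would establish this by the same derivation--division mechanism, reducing the system one function at a time (divide by the first function, differentiate, and argue that the resulting $n-1$ functions are again an ECT-system), with the base cases being the trivial $K=1$ bound $\mathcal{Z}=\deg Q_1$ and the constant-coefficient case, where $\sum_j b_j(x+c_j)^{-p-1/2}=0$ forces $\big((x+c_k)/(x+c_j)\big)^{p+1/2}$ to equal a constant; since such a ratio is strictly monotone in $x$, this gives the Chebyshev bound $K-1$. \textbf{The main obstacle} is precisely controlling this reduction: the tempting single-branch operations (multiplying by $(x+c_K)^{1/2}$ to eliminate one radical) create mixed radicals $\sqrt{(x+c_K)/(x+c_j)}$, while staying inside the clean class forces the coefficient degrees of the surviving branches to grow, so no naive ``remove one square root, pay $d+1$'' induction closes. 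The correct accounting is to remove one \emph{basis function} (not one radical) per differentiation, which is exactly what the ECT/Wronskian viewpoint provides; the technical core is then the verification that every leading Wronskian of the ordered system is nonvanishing on $I$, a generalized Vandermonde-type determinant that is nonzero because the $c_j$ are distinct and the functions $(x+c_j)^{-1/2}$ are algebraically independent.
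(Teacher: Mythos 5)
Your Phase 1 (killing $P^0$ by $\deg P^0+1$ differentiations, with the degrees of the radical coefficients preserved and the exponents shifted down) is correct and coincides with the first step of the paper's proof. The genuine gap is in Phase 2, which is where all the real work lies. You reduce the theorem to the claim that the $K(d+1)$ functions $x^i(x+c_j)^{-p-1/2}$ form an ECT-system, and you justify this by asserting that every leading Wronskian of the ordered system is nonvanishing on $I$ because it is ``a generalized Vandermonde-type determinant'' and the functions are algebraically independent. That is not a proof: for analytic functions, linear (or algebraic) independence only guarantees that a Wronskian is not \emph{identically} zero, not that it is nonvanishing at \emph{every} point of $I$, which is what the ECT property requires; and you neither exhibit the Vandermonde-type formula nor verify pointwise nonvanishing. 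This claim is in fact a research-level statement---essentially the subject of the separate paper \cite{GasLazTor2010a} announced in the introduction---so invoking it leaves the core of Theorem~\ref{thm:1.2} unproved. Your constant-coefficient base case is also insufficient as stated: rewriting $\sum_j b_j(x+c_j)^{-p-1/2}=0$ as ``a single ratio equals a constant'' only works for $K=2$.

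Moreover, the obstacle you flag as fatal for the derivation--division route---that dividing out one radical creates mixed radicals $\left(\frac{x+c_j}{x+c_1}\right)^{\alpha}$ whose polynomial coefficients grow by one degree per differentiation, so that ``no naive remove-one-square-root, pay $d+1$ induction closes''---is precisely the obstacle the paper removes, and removing it is its key idea (Lemma~\ref{le:3.2}): although each single differentiation of $p_n(x)\left(\frac{x+a}{x+b}\right)^{\alpha}$ does raise the coefficient degree by one, after exactly $n+1$ differentiations a cancellation occurs (in the explicit expansion all terms with $\ell\ge1$ carry the factor $\prod_{m=1}^{\ell}(m-1)=0$) and the coefficient degree drops back to $n$. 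With this sudden drop, eliminating each radical costs exactly $n+1$ derivatives while keeping every remaining coefficient degree equal to $n$, so the induction does close: $(K-1)(n+1)+n_0+1$ derivatives followed by a final polynomial of degree $n$ gives $\mathcal{Z}(F)\le K(n+1)+n_0$. In short, the route you abandoned is the one that works, and what your proposal is missing is exactly the cancellation lemma that makes it work.
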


In the forthcoming paper \cite{GasLazTor2010a} the above result is
extended to a wider family of functions, studying in particular its
sharpness and its relation with the theory of Chebyshev systems. We
only comment here that when  $\deg(P^j)$ coincide for all
$j=1,\ldots, K$ it can be seen that the result is sharp.

In Section~\ref{se:5} we apply Theorem~\ref{thm:1.1}
to some particular cases, already studied by other authors, all satisfying  $K_1+K_2\le4.$ More concretely, in that cases we show that our theorem either gives new proofs or improve the known results for the upper bounds for the number of zeroes of  $I(r).$

The main differences between our work and the previous ones are:

\begin{itemize}

\item[-] We  manage to study the case of having an arbitrary number
$ K$ of lines of critical points for the unperturbed system. All
previous results consider at most 4 lines with several relative
positions.

\item[-] We prove a result (Theorem~\ref{thm:1.2})
for bounding the number of zeroes of a special type of functions
which are precisely the ones that appear in the final expression of
the Abelian integral $I(r).$ The previous works apply general
methods like, squaring the equation to eliminate radicals, or the
principle of the argument, extending the function to $\C.$ It can be
seen that when there are more than two square roots in the
expression~\eqref{eq:5}, Theorem~\ref{thm:1.2} is sharper
than those general methods, see \cite{GasLazTor2010a}. This is
the reason for which we can improve some of the previously given
upper bounds for $\mathcal{Z}(I).$
\end{itemize}

Some comments about the sharpness of the upper bounds provided by 
Theorem~\ref{thm:1.1} are given at the end of Section~\ref{se:4}.

\section{Explicit computation of the Abelian integral}\label{se:2}

The aim of this section is to obtain an explicit expression of the
Abelian integral presented in the introduction. The first two lemmas
deal with the cases of one and two perpendicular singular lines.
They are already known, see
\cite{BuiLli2007,LliPerRod2001}, but the proof that we
present is shorter. The next two results extend them to the case of
an arbitrary number of parallel or perpendicular lines.

\begin{lem} \label{le:2.1}
Let $\a$ be a non-zero real number. For any
$0<r<|\a|$ and any polynomial $R_{n+1}(x,y)$ of degree $n+1,$  define
\begin{equation*}
I_{n+1}^\a(r) = \int_{0}^{2\pi}
\dfrac{R_{n+1}(r\cos\theta,r\sin\theta)}{r\cos\theta-\a}\,
d\theta.
\end{equation*}
Then, for $n\geq 0,$ one has
\begin{equation*}
I_{n+1}^\a(r) =
\frac{S_{[(n-1)/2]+1}(r^2)}{\sqrt{\a^2-r^2}}+T_{[n/2]}(r^2),
\end{equation*}
for suitable polynomials $S_s(\rho)$ and $T_s(\rho)$ of degree $s.$
Moreover $I_{0}^\a(r) = -2\pi R_{0}/\sqrt{\a^2-r^2}.$
\end{lem}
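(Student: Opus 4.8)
The plan is to compute $I_{n+1}^\a(r)$ by reducing it to a family of basic trigonometric integrals. Writing $R_{n+1}(x,y) = \sum_{i+k\le n+1} a_{ik} x^i y^k$ and substituting $x = r\cos\theta$, $y=r\sin\theta$, the integral becomes a linear combination (with coefficients depending on $r$) of terms $\int_0^{2\pi} \frac{\cos^i\theta\,\sin^k\theta}{r\cos\theta-\a}\,d\theta$. So it suffices to control these building blocks and then check that assembling them produces exactly the claimed structure, namely a part of the form $S(r^2)/\sqrt{\a^2-r^2}$ plus a polynomial part $T(r^2)$, with the stated degrees.

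First I would handle the monomial integrals. The key reduction is polynomial division: dividing the numerator monomial by the denominator $r\cos\theta - \a$ (viewed as a polynomial in $\cos\theta$), one writes $\cos^i\theta = (r\cos\theta-\a)\,Q(\cos\theta) + \rho$ where $Q$ is a polynomial and $\rho$ is the remainder (a constant). The integral of the $(r\cos\theta-\a)Q$ part against $1/(r\cos\theta-\a)$ leaves $\int_0^{2\pi} Q(\cos\theta)\sin^k\theta\,d\theta$, an elementary trigonometric integral whose value is a polynomial in $r$ (after pulling out powers of $r$); using parity in $\theta$ one sees only even powers of $r$ survive, contributing to the polynomial part $T(r^2)$. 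The remaining piece is the single genuinely singular integral $\int_0^{2\pi} \frac{\sin^k\theta}{r\cos\theta-\a}\,d\theta$, which by the same parity argument vanishes unless $k$ is even, and when $k=2m$ reduces via $\sin^2\theta = 1-\cos^2\theta$ and further division to multiples of the core integral $\int_0^{2\pi} \frac{d\theta}{r\cos\theta-\a} = -2\pi/\sqrt{\a^2-r^2}$ (valid since $0<r<|\a|$) plus polynomial corrections in $r^2$. This is where the $1/\sqrt{\a^2-r^2}$ factor enters and why everything organizes into functions of $r^2$.

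The main obstacle is bookkeeping the degrees rather than the evaluation itself. I would track carefully how the highest-degree monomials (those with $i+k=n+1$) contribute. The polynomial part $T$ should reach degree $[n/2]$ in $r^2$: the top terms of degree $n+1$ after division and integration yield powers $r^{2\lfloor \cdot\rfloor}$ bounded by $[n/2]$. For the singular part, the numerator $S$ should have degree $[(n-1)/2]+1$ in $r^2$; the $+1$ arises because the $1/\sqrt{\a^2-r^2}$ contributes together with the $r^2$-monomials coming from the division remainders, and one must verify the top contribution does not cancel. The cleanest route is an induction or a direct degree count verifying that a monomial $x^i y^k$ with $i+k \le n+1$ produces, in the singular part, a polynomial in $r^2$ of degree at most $[(n-1)/2]+1$; I would isolate the extremal case $k=0$, $i=n+1$ to see where the maximal degree is attained. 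Finally, the base case $I_0^\a(r)$ with $R_0$ constant is immediate from the core integral, giving $-2\pi R_0/\sqrt{\a^2-r^2}$ and confirming the normalization.
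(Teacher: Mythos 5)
Your proposal is correct, but it follows a genuinely different route from the paper. The paper proves the lemma by induction on the degree: it reduces to homogeneous $R_{n+1}$, kills odd powers of $\sin\theta$ by parity, converts $\sin^{2j}\theta=(1-\cos^2\theta)^j$, and handles the one remaining top term $\int r^{n+1}\cos^{n+1}\theta/(r\cos\theta-\a)\,d\theta$ via the rewriting $r\cos\theta=(r\cos\theta-\a)+\a$, which lowers the degree by one and lets the induction hypothesis apply; the degree bookkeeping for $S$ and $T$ is then carried through the induction (with separate cases $n$ even/odd). You instead do everything in one pass: Euclidean division of the numerator by $r\cos\theta-\a$ as a polynomial in $\cos\theta$, so that each monomial splits into an elementary trigonometric integral (contributing to $T$) plus a multiple of the core integral $\int_0^{2\pi}d\theta/(r\cos\theta-\a)=-2\pi/\sqrt{\a^2-r^2}$ (contributing to $S$). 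This is more direct and makes the singular coefficient explicit: for $x^iy^k$ with $k=2m$ the contribution to the numerator of the singular part is $-2\pi\,\a^i(r^2-\a^2)^m$, from which the bound $\deg S\le[(n-1)/2]+1=[(n+1)/2]$ is immediate; the price is that the quotient and remainder of your division carry negative powers of $r$, and you must check (as you implicitly do by ``pulling out powers of $r$'') that these cancel against the prefactor $r^{i+k}$ and that parity leaves only even powers. Two small corrections: the remainder $\rho$ is constant in $\theta$ but equals $(\a/r)^i$, not an absolute constant; and your proposed extremal case for the singular part is wrong --- for $(i,k)=(n+1,0)$ the singular numerator $\a^{n+1}$ has degree $0$ in $r^2$, and the maximal degree $[(n+1)/2]$ is instead attained at the monomials with the largest even $k$ (e.g.\ $(i,k)=(0,n+1)$ for $n$ odd, $(1,n)$ for $n$ even). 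Also note that ``the top contribution does not cancel'' need not be verified: since $R_{n+1}$ is arbitrary, the degrees in the statement can only be read as upper bounds, and upper bounds are all that Theorem~\ref{thm:1.1} uses.
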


\begin{proof}
It is easy to check that
\[
\int_0^{2\pi}\frac{1}{r\cos\theta-\a}d\theta=
-2\pi\frac{1}{\sqrt{\a^2-r^2}}.
\]
Hence the expression for $I_0^\a(r)$ follows.

Let us now deal with $n\ge0.$ We will proceed inductively. When
$n=0$ we have that
\[
I_1^\a(r)=\int_0^{2\pi}\frac{a_{0,0}+a_{0,1}r\cos\theta+a_{1,0}r\sin\theta}
{r\cos\theta-\a}d\theta.
\]
For this case we can write
\begin{align*}
I_1^{\a}(r)=&I_0^\a(r)+a_{0,1}\int_0^{2\pi}\frac{r\cos\theta}{r\cos\theta-\a}d\theta
+a_{1,0}\int_0^{2\pi}\frac{r\sin\theta}{r\cos\theta-\a}d\theta=\\
&I_0^\a(r)+a_{0,1}\int_0^{2\pi}\frac{r\cos\theta-\a+\a}{r\cos\theta-\a}d\theta=
I_0^\a(r)+2\pi
a_{0,1}+a_{0,1}\a
\int_0^{2\pi}\frac{1}{r\cos\theta-\a}d\theta=\\
&2\pi a_{0,1}-2\pi (a_{0,0}+a_{0,1} \a) \frac{1}{\sqrt{\a^2-r^2}}.
\end{align*}
Now we will prove the expression for $I_{n+1}^\a(r)$ by induction on
the degree. By hypothesis of induction it is enough to prove the
formula when $R_{n+1}(x,y)$ is a homogeneous polynomial of degree
$n+1.$ If we write
\[
\int_{0}^{2\pi}\frac{\sum\limits_{i=0}^{n+1}r^{n+1}a_{i,n+1-i}\sin^i\theta\cos^{n+1-i}
\theta}{r\cos\theta-\a}
d\theta=r^{n+1}\sum\limits_{i=0}^{n+1}\int_{0}^{2\pi}\frac{a_{i,n+1-i}\sin^i\theta
\cos^{n+1-i}\theta}{r\cos\theta-\a}d\theta,
\]
using symmetry properties of the integrated functions, we know that
all the integrals with an odd exponent in $\sin\theta$ are zero.
Then we have
\begin{align*}
& r^{n+1}\sum\limits_{j=0}^{[(n+1)/2]}a_{2j,n+1-2j}\int_{0}^{2\pi}\frac{\sin^{2j}
\theta\cos^{n+1-2j}\theta}{r\cos\theta-\a}d\theta =\\
& r^{n+1}\sum\limits_{j=0}^{[(n+1)/2]}a_{2j,n+1-2j}\int_{0}^{2\pi}\frac{(1-\cos^{2}
\theta)^j\cos^{n+1-2j}\theta}{r\cos\theta-\a}d\theta=\\
&
\sum\limits_{j=0}^{[(n+1)/2]}a_{2j,n+1-2j}\sum\limits_{k=0}^jr^{2j-2k}(-1)^j
\binom{j}{k}\int_{0}^{2\pi}\frac{r^{n+1+2k-2j}\cos^{n+1-2j+2k}\theta}
{r\cos\theta-\a}d\theta.
\end{align*}
When $k<j$ we can use again the induction hypotheses for each term
of the sum and the statement is proved because when we multiply by
the monomial $(r^2)^{j-k},$ the degrees, in $r^2,$ of the
polynomials $S(r^2)$ and $T(r^2)$ are $([(n+2k-2j-1)/2]+1)+(j-k)=[(n-1)/2]+1$
and $[(n+2k-2j)/2]+(j-k)=[n/2],$ respectively.

The unique case that remains to check is $k=j.$ For it we can
write
\begin{align*}
\int_{0}^{2\pi}\frac{r^{n+1}\cos^{n+1}\theta}{r\cos\theta-\a}d\theta=&
\int_{0}^{2\pi}\frac{r^{n}\cos^{n}\theta(r\cos\theta-\a+\a)}{r\cos\theta-\a}d\theta=\\
&\int_{0}^{2\pi}r^{n}\cos^{n}\theta  d\theta+
\a\int_{0}^{2\pi}\frac{r^{n}\cos^{n}\theta }{r\cos\theta-\a}d\theta.
\end{align*}
We will treat separately the cases $n$ even and $n$ odd.

When $n$ is even the first term of the above sum is a polynomial of
degree $n/2$ in $r^2$ and for the second term, using the induction
hypothesis, the polynomials $S(r^2)$ and $T(r^2)$ are of degree
$[((n-1)-1)/2]+1=(n-2)/2+1=n/2=[(n-1)/2]+1$ and $[(n-1)/2]=(n-2)/2,$
respectively. The statement is proved, in this case, because we
should add the monomial of degree $n/2$ corresponding to the first
summand. Hence the  degree of the polynomial $T(r^2)$ is $n/2=[n/2].$

When $n$ is odd the first term in the sum is zero. For the second
term, from the induction hypothesis, the polynomials $S(r^2)$ and $T(r^2)$ are
of degree $[((n-1)-1)/2]+1=(n-3)/2+1=(n-1)/2=[(n-1)/2]$ and
$[(n-1)/2]=[n/2],$ respectively. Note that the contribution of this
term to the polynomial $S(r^2)$ has one degree less than expected but the
total degree is $[(n-1)/2]+1$ because it appears for the other
terms. More concretely in the $(\sin\theta)^{[(n+1)/2]}$ term.
\end{proof}

\begin{lem} \label{le:2.2}
Let $\a$ and $\b$ be non-zero real numbers. For any
$0<r<\min(|\a|,|\b|)$ and any polynomial $R_{n+1}(x,y)$ of degree $n+1$ consider
\begin{equation*}
I_{n+1}^{\a,\b}(r)=\int_{0}^{2\pi}
\dfrac{R_{n+1}(r\cos\theta,r\sin\theta)}{(r\cos\theta-\a)(r\sin\theta-\b)}\,
d\theta.
\end{equation*}
Then, for $n\geq 0,$ we have that
\begin{equation*}
I_{n+1}^{\a,\b}(r)=\frac{1}{\a^2+\b^2-r^2}
\left(\frac{U_{[n/2]+1}(r^2)}{\sqrt{\a^2-r^2}}+\frac{V_{[n/2]+1}(r^2)}
{\sqrt{\b^2-r^2}}\right)
+W_{[(n-1)/2]}(r^2),
\end{equation*}
for some given polynomials $U_s(\rho),$ $V_s(\rho)$ and $W_s(\rho)$
of degree $s.$ Moreover
\begin{equation*}
I_{0}^{\a,\b}(r)=2\pi\frac{R_0}{\a^2+\b^2-r^2}
\left(\frac{\a}{\sqrt{\a^2-r^2}}+\frac{\b}{\sqrt{\b^2-r^2}}\right).
\end{equation*}
\end{lem}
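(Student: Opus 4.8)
The plan is to reduce this two–denominator integral to the one–denominator case already settled in Lemma~\ref{le:2.1}, isolating a single genuinely new ingredient: the base case $I_0^{\a,\b}$. First I would split the integrand algebraically in the two variables $x=r\cos\theta$ and $y=r\sin\theta$ separately. Dividing $R_{n+1}(x,y)$ by $x-\a$ in the variable $x$, and then the remainder $R_{n+1}(\a,y)$ by $y-\b$ in the variable $y$, gives
\[
\frac{R_{n+1}(x,y)}{(x-\a)(y-\b)}=\frac{Q_1(x,y)}{y-\b}+\frac{Q_2(y)}{x-\a}+\frac{R_{n+1}(\a,\b)}{(x-\a)(y-\b)},
\]
with $\deg Q_1\le n$ and $\deg Q_2\le n$, and without any appeal to $x^2+y^2=r^2$. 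The second term is handled directly by Lemma~\ref{le:2.1}; the first, after the substitution $\theta\mapsto \pi/2-\theta$ (equivalently, swapping $x$ and $y$), becomes the obvious ``sine'' analogue of Lemma~\ref{le:2.1}, with $\sqrt{\a^2-r^2}$ replaced by $\sqrt{\b^2-r^2}$. All the difficulty thus concentrates in the last term, a constant multiple of $I_0^{\a,\b}$.

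For that base case I would bring in the circle relation through the identity, valid when $x^2+y^2=r^2$,
\[
\frac{x+\a}{y-\b}+\frac{y+\b}{x-\a}=\frac{(x^2-\a^2)+(y^2-\b^2)}{(x-\a)(y-\b)}=\frac{r^2-\a^2-\b^2}{(x-\a)(y-\b)},
\]
which is precisely where the factor $\a^2+\b^2-r^2$ is born. Integrating both sides, and noting that $\int_0^{2\pi}\tfrac{r\sin\theta}{r\cos\theta-\a}\,d\theta$ and $\int_0^{2\pi}\tfrac{r\cos\theta}{r\sin\theta-\b}\,d\theta$ both vanish (their integrands are derivatives of periodic functions), while $\int_0^{2\pi}\tfrac{d\theta}{r\cos\theta-\a}=-2\pi/\sqrt{\a^2-r^2}$ and its sine analogue are already known, produces the stated closed form for $I_0^{\a,\b}(r)$.

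Finally I would reassemble the three pieces. The two line integrals yield a combination $\frac{S(r^2)}{\sqrt{\a^2-r^2}}+\frac{\widetilde S(r^2)}{\sqrt{\b^2-r^2}}$ plus a polynomial in $r^2$, and to reach the asserted shape I would force the common denominator $\a^2+\b^2-r^2$ by writing $\frac{S(r^2)}{\sqrt{\a^2-r^2}}=\frac{(\a^2+\b^2-r^2)\,S(r^2)}{(\a^2+\b^2-r^2)\sqrt{\a^2-r^2}}$, which raises each numerator degree in $r^2$ by exactly one. The only delicate point, and the main obstacle, is the integer–part bookkeeping: applying Lemma~\ref{le:2.1} with a numerator of degree $n$ gives $S$–polynomials of degree $[(n-2)/2]+1=[n/2]$, so after multiplication by $\a^2+\b^2-r^2$ they reach degree $[n/2]+1$, matching $U_{[n/2]+1}$ and $V_{[n/2]+1}$, whereas the polynomial parts $T_{[(n-1)/2]}$ merge into $W_{[(n-1)/2]}(r^2)$; the base-case term, carrying only constant numerators over $\a^2+\b^2-r^2$, fits comfortably within these degrees. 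The degenerate low cases $n=0,1$, where some $Q_i$ reduce to constants, I would check directly, as they collapse to the base case.
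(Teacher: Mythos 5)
Your proof is correct, and it takes a genuinely different route from the paper's. The paper proves Lemma~\ref{le:2.2} by induction on the degree of $R_{n+1}$: it reduces to homogeneous numerators, expands $\sin^{2j}\theta=(1-\cos^{2}\theta)^{j}$, and disposes of the two leftover terms $r^{n+1}\sin\theta\cos^{n}\theta$ and $r^{n+1}\cos^{n+1}\theta$ with the splittings $r\sin\theta-\b+\b$ and $r\cos\theta-\a+\a$, invoking Lemma~\ref{le:2.1} inside the induction; the base integral $I_0^{\a,\b}$ is simply quoted from the residue formula. You avoid induction altogether: two successive polynomial divisions (by $x-\a$ in $x$, then by $y-\b$ in $y$) split the integrand into two one-denominator pieces with numerators of degree at most $n$, which Lemma~\ref{le:2.1} and its sine analogue (via $\theta\mapsto\pi/2-\theta$) handle directly, plus a constant multiple of $I_0^{\a,\b}$; and you compute that base integral elementarily from the on-circle identity $\frac{x+\a}{y-\b}+\frac{y+\b}{x-\a}=\frac{r^2-\a^2-\b^2}{(x-\a)(y-\b)}$ instead of residues. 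Your degree bookkeeping checks out: numerators of degree $\le n$ give $S$-parts of degree $[(n-2)/2]+1=[n/2]$ and $T$-parts of degree $[(n-1)/2]$, and clearing the factor $\a^2+\b^2-r^2$, which in your scheme is born only in the base term, raises the radical numerators to $[n/2]+1$, matching $U_{[n/2]+1}$, $V_{[n/2]+1}$ and $W_{[(n-1)/2]}$. What your route buys is transparency: the provenance of the factor $\a^2+\b^2-r^2$ and of the ``$+1$'' in the degrees is isolated in one elementary computation, whereas the paper's route buys uniformity with the inductive machinery it already set up for Lemma~\ref{le:2.1}. One minor remark: your identity yields $I_0^{\a,\b}=\frac{2\pi}{\a^2+\b^2-r^2}\bigl(\frac{\a}{\sqrt{\b^2-r^2}}+\frac{\b}{\sqrt{\a^2-r^2}}\bigr)$, with $\a$ paired to $\sqrt{\b^2-r^2}$; this is the correct value (it is also what the paper's own proof uses in its $n=0$ computation), the displayed statement of the lemma having the two radicals inadvertently interchanged --- harmless for the structural claim, since $U$ and $V$ are arbitrary polynomials of the stated degrees.
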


\begin{proof}
Applying for instance residues formula, we obtain
\[
\int_0^{2\pi}\frac{d\theta}{(r\cos\theta-\a)(r\sin\theta-\b)}=
\frac{2\pi}{\a^2+\b^2-r^2}
\left(\frac{\a}{\sqrt{\a^2-r^2}}+\frac{\b}{\sqrt{\b^2-r^2}}\right).
\]
Therefore the expression for $I_0^{\a,\b}(r)$ follows.

To obtain the result for all $n$ we will proceed inductively.
When $n=0$ we have that
\[
I_1^{\a,\b}(r)=\int_0^{2\pi}\frac{a_{0,0}+a_{0,1}r\cos\theta+a_{1,0}r\sin\theta}
{(r\cos\theta-\a)(r\sin\theta-\b)}\,d\theta.
\]
For this case we can write
\begin{align*}
I_1^{\a,\b}(r)=&I_0^{\a,\b}(r)+a_{0,1}\int_0^{2\pi}\frac{r\cos\theta-\a+\a}
{(r\cos\theta-\a)(r\sin\theta-\b)}\,d\theta+\\
&a_{1,0}\int_0^{2\pi}
\frac{r\sin\theta-\b+\b}{(r\cos\theta-\a)(r\sin\theta-\b)}d\theta=I_0^{\a,\b}(r)+
a_{0,1}\int_0^{2\pi}\frac{1}{r\sin\theta-\b}\,d\theta+\\&a_{1,0}\int_0^{2\pi}
\frac{1}{r\cos\theta-\a}\,d\theta+
(a_{0,1}\a+a_{1,0}\b)\int_0^{2\pi}\frac{1}{(r\cos\theta-\a)(r\sin\theta-\b)}\,d\theta=\\
& a_{0,1} \frac{-2\pi}{\sqrt{\b^2-r^2}}+ a_{1,0}
\frac{-2\pi}{\sqrt{\a^2-r^2}}+
\frac{a_{0,0}+a_{0,1}\a+a_{1,0}\b}{\a^2+\b^2-r^2}\left(\frac{2\pi
\a}{\sqrt{\b^2-r^2}}+\frac{2\pi \b}{\sqrt{\a^2-r^2}}\right).
\end{align*}
This last expression satisfies the statement with $U(r^2)$ and $V(r^2)$
polynomials of degree $[n/2]+1=[0/2]+1=1$ in $r^2$ and the
polynomial $W(r^2)$ is identically zero.

Now we will prove the expression for $I_{n+1}^{\a,\b}(r)$ by induction
on the degree of $R_n(x,y).$ As in the proof of Lemma~\ref{le:2.1}, by
induction hypothesis, it is enough to prove the result  when $R_{n+1}(x,y)$ is a
homogeneous polynomial of degree $n+1.$ For this case we can write
\begin{align*}
&\int_{0}^{2\pi}\frac{\sum\limits_{i=0}^{n+1}r^{n+1}a_{i,n+1-i}\sin^i\theta\cos^{n+1-i}
\theta}{(r\cos\theta-\a)(r\sin\theta-\b)}\,
d\theta=\sum\limits_{i=0}^{n+1}\int_{0}^{2\pi}\frac{r^{n+1} a_{i,n+1-i}\sin^i\theta
\cos^{n+1-i}\theta}{(r\cos\theta-\a)(r\sin\theta-\b)}\,d\theta=\\
&\sum\limits_{j=0}^{\left[\frac{n}{2}\right]}\int_{0}^{2\pi}\frac{r^{n+1}
a_{2j+1,n+1-(2j+1)}\sin\theta(1-\cos^2\theta)^j\cos^{n+1-(2j+1)}\theta}
{(r\cos\theta-\a)(r\sin\theta-\b)}\,d\theta+\\
&\sum\limits_{j=0}^{\left[\frac{n+1}{2}\right]}\int_{0}^{2\pi}
\frac{r^{n+1}a_{2j,n+1-2j}(1-\cos^2\theta)^j\cos^{n+1-2j}\theta}
{(r\cos\theta-\a)(r\sin\theta-\b)}\,d\theta=\\
&\sum\limits_{j=0}^{\left[\frac{n}{2}\right]}
\sum_{k=0}^{j}\binom{j}{k}(-1)^k
a_{2j+1,n-2j}r^{2(j-k)}\int_{0}^{2\pi}\frac{r^{n-2(j-k)+1}\sin\theta\cos^{n-2(j-k)}\theta}
{(r\cos\theta-\a)(r\sin\theta-\b)}\,d\theta+\\
&\sum\limits_{j=0}^{\left[\frac{n+1}{2}\right]}
\sum_{k=0}^{j}\binom{j}{k}(-1)^k
a_{2j,n+1-2j}r^{2(j-k)}\int_{0}^{2\pi}\frac{r^{n+1-2(j-k)}\cos^{n+1-2(j-k)}\theta}
{(r\cos\theta-\a)(r\sin\theta-\b)}\,d\theta.
\end{align*}
When $k<j$ the statement is proved using the induction hypothesis in
each of the terms of the sum since, after the multiplication by
the monomial $(r^2)^{j-k},$ we have that the polynomials $U(r^2),$ $V(r^2)$
and $W(r^2)$ have degree $[(n-2(j-k))/2]+1+(j-k)=[n/2]+1,$
$[(n-2(j-k))/2]+1+(j-k)=[n/2]+1$ and
$[(n-2(j-k)-1)/2]+(j-k)=[(n-1)/2],$ respectively.

The unique case that remains to check is $k=j$ for both integrals:
\[
\int_{0}^{2\pi}\frac{r^{n+1}\sin\theta\cos^n\theta}{(r\cos\theta-\a)(r\sin\theta-\b)}
\,d\theta
\quad \textrm{and}\quad
\int_{0}^{2\pi}\frac{r^{n+1}\cos^{n+1}\theta}{(r\cos\theta-\a)(r\sin\theta-\b)}\,d\theta.
\]
For the first one we can write
\begin{align*}
&\int_{0}^{2\pi}\frac{r^{n+1}\sin\theta\cos^n\theta}{(r\cos\theta-\a)(r\sin\theta-\b)}
\,d\theta
=
\int_{0}^{2\pi}\frac{r^{n}(r\sin\theta-\b+\b)\cos^n\theta}{(r\cos\theta-\a)
(r\sin\theta-\b)}d\theta=\\&\int_{0}^{2\pi}\frac{r^{n}\cos^n\theta}
{r\cos\theta-\a}\,d\theta
+\b\int_{0}^{2\pi}\frac{r^{n}\cos^n\theta}{(r\cos\theta-\a)(r\sin\theta-\b)}\,d\theta.
\end{align*}
Thus we can take only the first term in the sum because for the
second one we can apply once more the induction hypothesis. For this
term, using the computation of $I_{n}^{\a}(r),$ the corresponding
polynomial $W(r^2),$ that is called $T$ in Lemma~\ref{le:2.1}, has degree
$[(n-1)/2]$ and the polynomial $U(r^2)$ can be written as
$(\a^2+\b^2-r^2)S_{[(n-2)/2]+1}(r^2)$ and it has degree
$([(n-2)/2]+1)+1=[n/2]+1.$

Now we will consider the last integral, that is
\begin{align*}
&\int_{0}^{2\pi}\frac{r^{n+1}\cos^{n+1}\theta}{(r\cos\theta-\a)(r\sin\theta-\b)}d\theta
=
\int_{0}^{2\pi}\frac{r^n\cos^n\theta(r\cos\theta-\a+\a)}{(r\cos\theta-\a)
(r\sin\theta-\b)}d\theta =\\
&\int_{0}^{2\pi}\frac{r^n\cos^n\theta}{r\sin\theta-\b}d\theta
+\a\int_{0}^{2\pi}\frac{r^n\cos^n\theta}{(r\cos\theta-\a)(r\sin\theta-\b)}d\theta.
\end{align*}
Changing $\theta$ by $\theta+\pi/2$ we can use the expression for
$I^{\b}_{n}(r)$ computed in Lemma~\ref{le:2.1} to prove that, as in the
previous computation, this integral satisfies the formula of the
statement. The second term follows applying the induction
hypothesis. Then the statement is proved.
\end{proof}

\begin{prop} \label{pr:2.3}
Let $K$ be a natural number, let $P_n(x,y)$
and $Q_n(x,y)$ be real polynomials of degree $n,$ $\gamma_r= \left\{
(x,y)\ \vert \ x^2+y^2=r^2\right\},$ $\{\a_1,\ldots,\a_{K}\}$
different real numbers and
\[
I(r)= \int_{\gamma_r} \dfrac{Q_n(x,y) \, dx - P_n(x,y) \,
dy}{\prod\limits_{j=1}^{K} (x-\a_j) }.
\]
Then,
\begin{equation}\label{eq:7}
I(r)=
\sum\limits_{j=1}^{\widetilde{K}}\frac{S^j_{[(n-1)/2]+1}(r^2)}
{\sqrt{\widetilde{\a}_j^2-r^2}}+T_{[n/2]}(r^2),
\end{equation}
for suitable polynomials $S^j_s(\rho),$ $T_s(\rho)$ of degree $s,$
where $\widetilde{K}=\card\{|\a_1|,\ldots,|\a_K|\}$ and
$\widetilde{\a}_1,\ldots,\widetilde{\a}_{\widetilde{K}}$ denoting
the different values of the set $\{|\a_1|,\ldots,|\a_K|\}.$
\end{prop}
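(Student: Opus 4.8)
The plan is to reduce the line integral to the angular form treated in Lemma~\ref{le:2.1} and then exploit the distinctness of the $\a_j$ through a partial fraction decomposition. First I would parametrize $\gamma_r$ by $x=r\cos\theta$, $y=r\sin\theta$, so that $dx=-r\sin\theta\,d\theta$ and $dy=r\cos\theta\,d\theta$. Substituting into the integrand, the numerator $Q_n\,dx-P_n\,dy$ becomes the evaluation at $(r\cos\theta,r\sin\theta)$ of the polynomial
\[
R_{n+1}(x,y):=-y\,Q_n(x,y)-x\,P_n(x,y),
\]
which has degree $n+1$. Hence
\[
I(r)=\int_0^{2\pi}\frac{R_{n+1}(r\cos\theta,r\sin\theta)}{\prod_{j=1}^K(r\cos\theta-\a_j)}\,d\theta.
\]

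Next, since the $\a_j$ are pairwise distinct, I would split only the rational factor,
\[
\frac{1}{\prod_{j=1}^K(x-\a_j)}=\sum_{j=1}^K\frac{A_j}{x-\a_j},\qquad A_j=\frac{1}{\prod_{i\ne j}(\a_j-\a_i)}\in\R,
\]
and multiply by $R_{n+1}$. No polynomial part is created, because it is $1/\prod(x-\a_j)$ and not $R_{n+1}$ that is being decomposed, so each resulting summand is exactly of the shape covered by Lemma~\ref{le:2.1}. This gives $I(r)=\sum_{j=1}^K A_j\,I_{n+1}^{\a_j}(r)$, and applying that lemma term by term yields
\[
I(r)=\sum_{j=1}^K A_j\left(\frac{S^{(j)}_{[(n-1)/2]+1}(r^2)}{\sqrt{\a_j^2-r^2}}+T^{(j)}_{[n/2]}(r^2)\right).
\]

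The key observation, and the only point where $\widetilde K$ enters, is that $\sqrt{\a_j^2-r^2}$ depends on $\a_j$ only through $|\a_j|$. I would therefore group the indices $j$ according to the common value of $|\a_j|$, collecting all terms whose radical equals $\sqrt{\widetilde\a_l^2-r^2}$ into a single summand; adding the corresponding numerators $A_j S^{(j)}$ produces one polynomial $S^l$ of degree at most $[(n-1)/2]+1$ per distinct value $\widetilde\a_l$, of which there are exactly $\widetilde K=\card\{|\a_1|,\ldots,|\a_K|\}$. Likewise, the polynomial part $T:=\sum_j A_j T^{(j)}$ is a sum of polynomials of degree $[n/2]$, hence has degree $[n/2]$, and this is precisely~\eqref{eq:7}. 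I do not expect a serious obstacle here: the argument is reduction plus bookkeeping, and the one conceptual step is recognizing that distinct $\a_j$ sharing an absolute value feed into the same square root, which is exactly what collapses the number of radical terms from $K$ to $\widetilde K$.
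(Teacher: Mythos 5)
Your proposal is correct and follows essentially the same route as the paper's proof: polar parametrization giving the numerator $R_{n+1}$ of degree $n+1$, partial fraction decomposition of $1/\prod_{j}(x-\a_j)$ with constant coefficients $A_j$, term-by-term application of Lemma~\ref{le:2.1}, and finally grouping the radical terms by the common values $|\a_j|$ to pass from $K$ to $\widetilde{K}$ summands. The only (immaterial) discrepancy is the sign convention in the coefficients: your $A_j=1/\prod_{i\ne j}(\a_j-\a_i)$ is the standard one, while the paper writes $1/\prod_{\ell\ne j}(\a_\ell-\a_j)$; the difference is a factor $(-1)^{K-1}$ that is absorbed into the unspecified polynomials.
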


\begin{proof}
Parametrising $\gamma_r$ using polar coordinates,
$(x,y)=(r\cos\theta,r\sin\theta),$ we can write
\[
I(r)=\int_0^{2\pi}\frac{R_{n+1}(r\cos\theta,r\sin\theta)}{\prod\limits_{j=1}^{K}
(r\cos\theta-\a_j)}\, d\theta
\]
where $R_{n+1}(r\cos\theta,r\sin\theta)= - r
Q_n(r\cos\theta,r\sin\theta)\sin\theta - r
P_n(r\cos\theta,r\sin\theta)\cos\theta.$ Performing partial fraction
decomposition and using Lemma~\ref{le:2.1}, it turns out that
\begin{align*}
I(r) =& \sum\limits_{j=1}^{K}\int_0^{2\pi}\frac{1}{\prod
\limits_{\fracz{\ell=1}{\ell\ne j}}^K(\a_{\ell}-\a_j)}
\frac{R_{n+1}(r\cos\theta,r\sin\theta) }{r\cos\theta-\a_j}\,d\theta= \\[1.2ex]
&
\sum\limits_{j=1}^{K}\left(\frac{S^j_{[(n-1)/2]+1}(r^2)}
{\sqrt{\a_j^2-r^2}}+T^j_{[n/2]}(r^2)\right)
=
\sum\limits_{j=1}^{K}\left(\frac{S^j_{[(n-1)/2]+1}(r^2)}{\sqrt{\a_j^2-r^2}}
\right) + T_{[n/2]}(r^2),
\end{align*}
provided we define $T_{[n/2]}(\rho)=\sum_{j=1}^K T^j_{[n/2]}(\rho).$
Since this expression depends only on the absolute values $|\a_j|$
we collect terms, consider new polynomials $S^j_s(\rho)$ and, at the end,
get formula~\eqref{eq:7}.
\end{proof}

\begin{prop} \label{pr:2.4}
Let $K_1,$ $K_2$ be natural
numbers, $P_n(x,y)$ and $Q_n(x,y)$ be real polynomials of degree
$n,$ $\gamma_r= \left\{ (x,y)\ \vert \ x^2+y^2=r^2\right\}$ and
$\{\a_1,\ldots,\a_{K_1},\b_1,\ldots,\b_{K_2}\}$ real numbers
satisfying that $\a_j\ne \a_{\ell}$ and $\b_j\ne \b_{\ell}$  for
$j\ne\ell.$ Consider
\[
I(r)= \int_{\gamma_r} \dfrac{Q_n(x,y) \, dx - P_n(x,y) \,
dy}{\prod\limits_{j=1}^{K_1} (x-\a_j) \prod\limits_{k=1}^{K_2}
(y-\b_k)}.
\]
Then,
\begin{multline*}
I(r)= \sum\limits_{j=1}^{K_1}\left(\sum\limits_{k=1}^{K_2}
\frac{U^{j,k}_{[n/2]+1}(r^2)}{\a_j^2+\b_k^2-r^2}\right)\frac{1}{\sqrt{\a_j^2-r^2}}+\\
\sum\limits_{k=1}^{K_2}\left(\sum\limits_{j=1}^{K_1}
\frac{V^{j,k}_{[n/2]+1}(r^2)}{\a_j^2+\b_k^2-r^2}\right)
\frac{1}{\sqrt{\b_k^2-r^2}}+W_{[(n-1)/2]}(r^2),
\end{multline*}
for suitable polynomials  $U^{j,k}_s(\rho),$ $V^{j,k}_s(\rho)$ and
$W_s(\rho)$ of degree $s.$
\end{prop}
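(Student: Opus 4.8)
The plan is to follow the same template as in Proposition~\ref{pr:2.3}, reducing everything to the two-line case already settled in Lemma~\ref{le:2.2}. First I would parametrise $\gamma_r$ by $(x,y)=(r\cos\theta,r\sin\theta)$ and write
\[
I(r)=\int_0^{2\pi}\frac{R_{n+1}(r\cos\theta,r\sin\theta)}{\prod_{j=1}^{K_1}(r\cos\theta-\a_j)\prod_{k=1}^{K_2}(r\sin\theta-\b_k)}\,d\theta,
\]
where $R_{n+1}(r\cos\theta,r\sin\theta)=-rQ_n(r\cos\theta,r\sin\theta)\sin\theta-rP_n(r\cos\theta,r\sin\theta)\cos\theta$ has degree $n+1$, exactly as in the proof of Proposition~\ref{pr:2.3}.

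The key step is a double partial fraction decomposition. Since the denominator factors into a part depending only on $X:=r\cos\theta$ and a part depending only on $Y:=r\sin\theta$, and since the $\a_j$ are pairwise distinct and the $\b_k$ are pairwise distinct, the two one-variable decompositions decouple:
\[
\frac{1}{\prod_{j=1}^{K_1}(X-\a_j)}=\sum_{j=1}^{K_1}\frac{A_j}{X-\a_j},\qquad \frac{1}{\prod_{k=1}^{K_2}(Y-\b_k)}=\sum_{k=1}^{K_2}\frac{B_k}{Y-\b_k},
\]
with $A_j=1/\prod_{\ell\ne j}(\a_\ell-\a_j)$ and $B_k=1/\prod_{\ell\ne k}(\b_\ell-\b_k)$ nonzero constants. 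Multiplying these identities and substituting $X=r\cos\theta$, $Y=r\sin\theta$ turns the integrand into a linear combination, with coefficients $A_jB_k$, of the integrands $R_{n+1}/\big((r\cos\theta-\a_j)(r\sin\theta-\b_k)\big)$, so that
\[
I(r)=\sum_{j=1}^{K_1}\sum_{k=1}^{K_2}A_jB_k\,I_{n+1}^{\a_j,\b_k}(r).
\]

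Now I would apply Lemma~\ref{le:2.2} to each summand $I_{n+1}^{\a_j,\b_k}(r)$, which contributes a term $\tfrac{1}{\a_j^2+\b_k^2-r^2}\big(\tfrac{U^{j,k}_{[n/2]+1}(r^2)}{\sqrt{\a_j^2-r^2}}+\tfrac{V^{j,k}_{[n/2]+1}(r^2)}{\sqrt{\b_k^2-r^2}}\big)+W^{j,k}_{[(n-1)/2]}(r^2)$. Absorbing the nonzero constants $A_jB_k$ into these polynomials (which does not raise their degrees), it only remains to regroup the double sum: collecting all terms carrying $1/\sqrt{\a_j^2-r^2}$ for a fixed $j$ produces $\big(\sum_{k=1}^{K_2}U^{j,k}_{[n/2]+1}(r^2)/(\a_j^2+\b_k^2-r^2)\big)/\sqrt{\a_j^2-r^2}$, symmetrically for the $\b_k$ terms, and setting $W_{[(n-1)/2]}(r^2):=\sum_{j,k}A_jB_kW^{j,k}_{[(n-1)/2]}(r^2)$ yields the claimed expression.

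I expect no genuine obstacle here: the whole argument is the two-line Lemma~\ref{le:2.2} plus linear bookkeeping. The only points requiring care are verifying that the partial fractions in $X$ and $Y$ really decouple into the product double sum (which is where the distinctness hypotheses $\a_i\ne\a_j$ and $\b_i\ne\b_j$ enter), and checking that no degree is inadvertently increased when the constants are absorbed and the terms regrouped; in particular one must keep the factor $\a_j^2+\b_k^2-r^2$ attached termwise inside each inner sum rather than reducing to a common denominator, which is precisely the form recorded in the statement.
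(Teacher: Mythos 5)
Your proposal is correct and follows essentially the same route as the paper's own proof: polar parametrisation, the decoupled partial fraction decomposition in $r\cos\theta$ and $r\sin\theta$ (valid precisely because the $\a_j$ and the $\b_k$ are pairwise distinct), termwise application of Lemma~\ref{le:2.2}, and regrouping of the double sum with the constants absorbed into the polynomials. The bookkeeping points you flag (keeping $\a_j^2+\b_k^2-r^2$ inside each inner sum, degrees unchanged by constant factors) are exactly how the paper handles it.
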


\begin{proof}
Parameterising $\gamma_r$ using polar coordinates,
$(x,y)=(r\cos\theta,r\sin\theta),$ we can write
\[
I(r)=\int_0^{2\pi}\frac{R_{n+1}(r\cos\theta,r\sin\theta)
}{\prod\limits_{j=1}^{K_1} (r\cos\theta-\a_j)
\prod\limits_{k=1}^{K_2} (r\sin\theta-\b_k)}\,d\theta,
\]
where $R_{n+1}(r\cos\theta,r\sin\theta)= - r
Q_n(r\cos\theta,r\sin\theta)\sin\theta - r
P_n(r\cos\theta,r\sin\theta)\cos\theta.$ Performing a partial
fraction expansion and using Lemma~\ref{le:2.2} it follows that
\begin{align*}
I(r)=& \sum\limits_{j=1}^{K_1}\sum\limits_{k=1}^{K_2}\int_0^{2\pi}
\frac{1}{\prod\limits_{\fracz{{\ell}=1}{{\ell}\ne j}}^{K_1}
(\a_{\ell}-\a_j)}\frac{1}{\prod\limits_{\fracz{{\ell}=1}
{{\ell}\ne k}}^{K_2}(\b_{\ell}-\b_k)}\frac{R_{n+1}
(r\cos\theta,r\sin\theta)}{(r\cos\theta-\a_j)(r\sin\theta-\b_k)}\,d\theta=\\
& \sum\limits_{j=1}^{K_1}\sum\limits_{k=1}^{K_2}\left(
\frac{1}{\a_j^2+\b_k^2-r^2}\left( \frac{U^{j,k}_{[n/2]+1}(r^2)}
{\sqrt{\a_j^2-r^2}}+\frac{V^{j,k}_{[n/2]+1}(r^2)}{\sqrt{\b_k^2-r^2}}\right)
 +W^{j,k}_{[(n-1)/2]}(r^2)\right)=\\
& \sum\limits_{j=1}^{K_1}\left(\sum\limits_{k=1}^{K_2}
\frac{U^{j,k}_{[n/2]+1}(r^2)}{\a_j^2+\b_k^2-r^2}\right)\frac{1}{\sqrt{\a_j^2-r^2}}
+\sum\limits_{k=1}^{K_2}\left(\sum\limits_{j=1}^{K_1}
\frac{V^{j,k}_{[n/2]+1}(r^2)}{\a_j^2+\b_k^2-r^2}\right) \frac{1}{\sqrt{\b_k^2-r^2}}+\\
&\sum\limits_{j=1}^{K_1}\sum\limits_{k=1}^{K_2}
W^{j,k}_{[(n-1)/2]}(r^2),
\end{align*}
for a collection of polynomials $U^{j,k}_s(\rho),$
$V^{j,k}_s(\rho),$ $W^{j,k}_s(\rho)$ of degree $s$ for all
$j=1,\ldots,K_1$ and $k=1,\ldots,K_2.$ Denoting
$W_{[(n-1)/2]}(r^2)=\sum\limits_{j=1}^{K_1}\sum\limits_{k=1}^{K_2}
W^{j,k}_{[(n-1)/2]}(r^2),$ the claimed result follows.
\end{proof}

\section{Proof of Theorem \ref{thm:1.2}}\label{se:3}

The proof of  Theorem \ref{thm:1.2} will use the known as
Derivation-Division procedure (see for instance \cite[p. 119]{Rou1998}).
First  we give two technical results useful to compute successive
de\-ri\-va\-ti\-ves that appear applying this procedure to the function
\eqref{eq:5}. The proof of the first one is very simple and implies that, in general,
\begin{equation} \label{eq:8}
\mathcal{D}^{k}
\left(p_n(x)\left(\frac{x+a}{x+b}\right)^{\alpha}\right) = 
q_{n+k}(x)\frac{(x+a)^{\alpha-k}}{(x+b)^{\alpha+k}}.
\end{equation}
The proof of the second one is a little more involved and, as we will see, it
constitutes the key point for proving Theorem \ref{thm:1.2}.
It shows that the case 
$k=n+1$ in \eqref{eq:8} is very special, since it undergoes a sudden drop of
the degree of the polynomial $q_{n+k}(x).$

\begin{lem}\label{le:3.1}
Consider $a\neq b$ and $\alpha\not\in\Z$  real numbers. Then
\begin{equation*}
\mathcal{D} \left(p_n(x)(x+a)^{\alpha}\right) =
q_n(x)(x+a)^{\alpha-1}, \quad \mathcal{D}
\left(p_n(x)\left(\frac{x+a}{x+b}\right)^{\alpha}\right) =
q_{n+1}(x)\frac{(x+a)^{\alpha-1}}{(x+b)^{\alpha+1}},
\end{equation*}
where $p_i(x)$ and $q_i(x)$ are polynomials of degree $i.$
\end{lem}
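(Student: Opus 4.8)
The plan is to establish both identities by a direct application of the product rule, factoring out the appropriate powers of $(x+a)$ (and of $(x+b)$), and then reading off the degree of the polynomial cofactor that remains. Since $\alpha\notin\Z$, the factors $(x+a)^{\alpha}$ and $(x+b)^{-\alpha}$ are genuinely non-polynomial, so each expression of the form ``polynomial $\times$ product of non-integer powers'' has a uniquely determined polynomial cofactor; this is what makes the degree assertions meaningful. The only real content is then to check that the leading coefficient of that cofactor does not vanish, and it is precisely here that the hypotheses $\alpha\notin\Z$ and $a\neq b$ are used.

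For the first identity I would write $p_n(x)=c\,x^{n}+\cdots$ with $c\neq0$ and compute
\[
\mathcal{D}\bigl(p_n(x)(x+a)^{\alpha}\bigr)
= p_n'(x)(x+a)^{\alpha}+\alpha\,p_n(x)(x+a)^{\alpha-1}
= (x+a)^{\alpha-1}\bigl[(x+a)p_n'(x)+\alpha\,p_n(x)\bigr].
\]
Setting $q_n(x):=(x+a)p_n'(x)+\alpha\,p_n(x)$, both summands have degree at most $n$, and the coefficient of $x^{n}$ equals $cn+\alpha c=(n+\alpha)c$. The crucial point is that $\alpha\notin\Z$ forces $n+\alpha\neq0$, so this coefficient is nonzero and $\deg q_n=n$ exactly; had $\alpha$ been the integer $-n$ the degree would have dropped. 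This is the sole place the non-integrality hypothesis is needed for the first formula.

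For the second identity I would differentiate the three-factor product $p_n(x)(x+a)^{\alpha}(x+b)^{-\alpha}$ and factor out the common powers, obtaining
\begin{align*}
\mathcal{D}\!\left(p_n(x)\Bigl(\tfrac{x+a}{x+b}\Bigr)^{\!\alpha}\right)
&=(x+a)^{\alpha-1}(x+b)^{-\alpha-1}\,q_{n+1}(x),\\
q_{n+1}(x)&:=(x+a)(x+b)p_n'(x)+\alpha(b-a)\,p_n(x),
\end{align*}
where the lower-order term arises from $(x+b)-(x+a)=b-a$. Here $a\neq b$ guarantees that this term genuinely contributes, while the leading behaviour comes from $(x+a)(x+b)p_n'(x)$, whose top coefficient is $cn$; for $n\ge1$ this gives $\deg q_{n+1}=n+1$, and the degenerate case reduces to the nonzero constant $\alpha(b-a)p_0$.

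I expect the main obstacle to be purely the bookkeeping of exact degrees rather than any conceptual difficulty: the differentiation identities are immediate, and the work lies entirely in the leading-coefficient analysis showing that the residual polynomials attain (and do not exceed) the claimed degrees. It is exactly there that $\alpha\notin\Z$ (so that $n+\alpha\neq0$) and $a\neq b$ (so that $b-a\neq0$) are indispensable. Once Lemma~\ref{le:3.1} is available, the general formula~\eqref{eq:8} follows by iterating the second identity, each differentiation raising the polynomial degree by one while shifting the two exponents by $\mp1$.
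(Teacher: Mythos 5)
Your proof is correct and coincides with what the paper intends: the paper never writes out a proof of Lemma~\ref{le:3.1} (it is dismissed as ``very simple'' just before \eqref{eq:8}), and the expected argument is exactly your product-rule computation together with the leading-coefficient checks --- $(n+\alpha)c\neq 0$ because $\alpha\notin\Z$ for the first identity, and the dominance of $(x+a)(x+b)p_n'(x)$ together with $\alpha(b-a)\neq 0$ for the second. Your remark that for $n=0$ the second cofactor is the nonzero constant $\alpha(b-a)p_0$ (degree $0$, not $1$) is a fair catch of a slight imprecision in the statement; it is harmless for the paper's purposes, since these degrees only ever enter the Derivation--Division procedure as upper bounds.
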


\begin{lem}\label{le:3.2}
Consider $a\neq b$ and  $\alpha\not\in\Z$   real numbers. Then
\begin{equation*}
\mathcal{D}^{n+1}
\left(p_n(x)\left(\frac{x+a}{x+b}\right)^{\alpha}\right) =
q_{n}(x)\frac{(x+a)^{\alpha-(n+1)}}{(x+b)^{\alpha+(n+1)}},
\end{equation*}
where $p_n(x)$ and $q_n(x)$ are polynomials of degree $n.$
\end{lem}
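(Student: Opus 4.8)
The plan is to prove that $\mathcal{D}^{n+1}\!\left(p_n(x)\left(\frac{x+a}{x+b}\right)^{\alpha}\right)$ is a polynomial of degree at most $n$ times $(x+a)^{\alpha-(n+1)}(x+b)^{-(\alpha+(n+1))}$; the whole point is the sudden drop from the degree $2n+1$ that naive iteration of \eqref{eq:8} would predict. Writing $g(x)=\left(\frac{x+a}{x+b}\right)^{\alpha}=(x+a)^{\alpha}(x+b)^{-\alpha}$, everything is linear in $p_n$, so I would first expand $p_n$ in the basis adapted to the factor $(x+a)^{\alpha}$, namely $p_n(x)=\sum_{i=0}^{n}c_i(x+a)^i$, and treat each term separately. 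The merit of this basis is that $(x+a)^ig(x)=(x+a)^{\alpha+i}(x+b)^{-\alpha}$ is a pure product of two powers, whose derivatives are transparent through the Leibniz rule.

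The heart of the argument, and the only genuinely delicate point, is the following collapse: applying \emph{exactly} $i+1$ derivatives to $(x+a)^{\alpha+i}(x+b)^{-\alpha}$ leaves only a constant polynomial factor,
\[
\mathcal{D}^{i+1}\!\left((x+a)^{\alpha+i}(x+b)^{-\alpha}\right)=(b-a)^{i+1}\Big(\prod_{l=0}^{i}(\alpha+l)\Big)(x+a)^{\alpha-1}(x+b)^{-\alpha-i-1}.
\]
To prove it I would apply Leibniz and factor out $(x+a)^{\alpha-1}(x+b)^{-\alpha-i-1}$, reducing to
\[
\sum_{j=0}^{i+1}\binom{i+1}{j}(\alpha+i)_j\,(-\alpha)_{i+1-j}\,(x+a)^{i+1-j}(x+b)^{j},
\]
where $(t)_j=t(t-1)\cdots(t-j+1)$. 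The decisive observation is that for these exponents $(\alpha+i)_j(-\alpha)_{i+1-j}=(-1)^{i+1-j}\prod_{l=0}^{i}(\alpha+l)$ is \emph{independent of} $j$; pulling this constant out, the remaining sum is the binomial expansion of $\big((x+b)-(x+a)\big)^{i+1}=(b-a)^{i+1}$. This $j$-independence, forced precisely because the number $i+1$ of derivatives is one more than the sum $(\alpha+i)+(-\alpha)=i$ of the two exponents, is the exact mechanism behind the degree drop, and $\alpha\notin\Z$ guarantees the constant does not vanish.

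With the collapse established the remainder is bookkeeping. Rewriting $(x+a)^{\alpha-1}=(x+a)^i(x+a)^{\alpha-(i+1)}$, the identity says $\mathcal{D}^{i+1}((x+a)^ig)$ equals a polynomial of degree $i$ times $(x+a)^{\alpha-(i+1)}(x+b)^{-(\alpha+i+1)}$. I would then apply the remaining $n-i$ derivatives by means of the elementary product-rule fact underlying \eqref{eq:8} and Lemma~\ref{le:3.1}: one differentiation of $(\text{degree }d)\cdot(x+a)^{\beta}(x+b)^{\gamma}$ gives, after extracting $(x+a)^{\beta-1}(x+b)^{\gamma-1}$, a polynomial of degree at most $d+1$ times $(x+a)^{\beta-1}(x+b)^{\gamma-1}$. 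Iterating $n-i$ times raises the degree from $i$ to at most $i+(n-i)=n$ and lowers both exponents to $\alpha-(n+1)$ and $-(\alpha+(n+1))$; hence each $\mathcal{D}^{n+1}((x+a)^ig)$ has the asserted form with polynomial factor of degree at most $n$, and summing over $i$ against the $c_i$ finishes the proof. The essential content is this upper bound $n$ on the degree, which is what the Derivation--Division step in Theorem~\ref{thm:1.2} requires.
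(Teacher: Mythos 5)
Your proof is correct, and it reaches the collapse at the heart of Lemma~\ref{le:3.2} by a genuinely different computation. The paper writes $p_n(x)=(x+a)^n+p_{n-1}(x)$ and inducts on $n$, so that only the top monomial needs direct treatment; for that monomial it runs a second induction, on the order $j$ of differentiation, producing an explicit expansion of $\mathcal{D}^{j}\bigl((x+a)^{\alpha+n}(x+b)^{-\alpha}\bigr)$ with Gamma-function coefficients $C_{j,\ell}$, and concludes because at the critical order $j=n+1$ every term with $\ell\ge1$ carries the factor $\prod_{m=1}^{\ell}(m-1)=0$. You instead expand $p_n$ in the full basis $(x+a)^i$, $i=0,\ldots,n$, and compute the $(i+1)$-st derivative of $(x+a)^{\alpha+i}(x+b)^{-\alpha}$ in one shot by the Leibniz rule: your identity $(\alpha+i)_j\,(-\alpha)_{i+1-j}=(-1)^{i+1-j}\prod_{l=0}^{i}(\alpha+l)$ is correct --- the two falling factorials together exhaust the factors $\alpha+l$, $l=0,\ldots,i$, exactly once --- and extracting this $j$-independent constant turns the Leibniz sum into the binomial expansion of $\bigl((x+b)-(x+a)\bigr)^{i+1}=(b-a)^{i+1}$. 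The remaining $n-i$ derivatives are then absorbed by the elementary one-step product-rule bound (the version of Lemma~\ref{le:3.1} with independent exponents on $(x+a)$ and $(x+b)$, which the paper also uses tacitly when it applies one further derivative to $\mathcal{D}^{n}\bigl(p_{n-1}(x)((x+a)/(x+b))^{\alpha}\bigr)$). Both arguments exploit the same phenomenon --- the number of derivatives exceeds by one the sum $(\alpha+i)+(-\alpha)=i$ of the two exponents --- but yours replaces the double induction and the Gamma-function bookkeeping by a single closed-form Leibniz computation that makes the mechanism transparent, while the paper's calculation yields in exchange a formula for all intermediate derivatives, not just the critical one. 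One last point: you only prove $\deg q_n\le n$ rather than exactly $n$; this is all that the Derivation--Division argument of Theorem~\ref{thm:1.2} requires, and it is in fact the correct reading of the lemma, since cancellation among the contributions of different $i$ can genuinely lower the degree below $n$.
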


\begin{proof}
We will prove the statement inductively. It is not restrictive to
consider $p_n(x)$ as a monic polynomial and written as
$p_n(x)=(x+a)^n+p_{n-1}(x)$ for a suitable polynomial $p_{n-1}(x)$
of degree $n-1.$ From the equalities
\begin{equation*}
\begin{aligned}
&\mathcal{D}^{n+1}
\left(p_n(x)\left(\frac{x+a}{x+b}\right)^{\alpha}\right) =
\mathcal{D}^{n+1} \left(\left((x+a)^n+p_{n-1}(x)\right)\left(\frac{x+a}{x+b}
\right)^{\alpha}\right) =\\
&\mathcal{D}^{n+1}
\left((x+a)^n\left(\frac{x+a}{x+b}\right)^{\alpha}\right)
+\mathcal{D}\left(\mathcal{D}^{n}
\left(p_{n-1}(x)\left(\frac{x+a}{x+b}\right)^{\alpha}\right)\right)
\end{aligned}
\end{equation*}
and using Lemma~\ref{le:3.1}, we have that it is enough to
prove the statement for $p_n(x)=(x+a)^{n}.$

Indeed, it is not difficult to check, inductively, that the
following expression is satisfied
\begin{equation*}
\mathcal{D}^{j}
\left(\frac{(x+a)^{\alpha+n}}{(x+b)^{\alpha}}\right)=\frac{(x+a)^{\alpha+n-j}}
{(x+b)^{\alpha+j}}\sum\limits_{\ell=0}^{j}C_{j,\ell}(x+a)^{\ell}
\end{equation*}
with
\[C_{j,\ell}=
(-1)^{j+\ell}(a-b)^{j-\ell}\binom{j}{\ell}\frac{\Gamma(\alpha+n+1)}
{\Gamma(\alpha+n-(j-\ell)+1)}\prod\limits_{m=1}^{\ell}(n-j+m),
\]
where we define $\prod\limits_{m=1}^{0}(n-j+m)=1$ and  $\Gamma$ is
the Gamma function. Therefore,
\begin{multline*}
\mathcal{D}^{n+1} \left((x+a)^n
\left(\frac{x+a}{x+b}\right)^{\alpha}\right)=
\frac{(x+a)^{\alpha+n-(n+1)}}{(x+b)^{\alpha+(n+1)}}
\sum\limits_{\ell=0}^{(n+1)}C_{n+1,\ell}(x+a)^{\ell}=\\
\frac{(x+a)^{\alpha-1}}{(x+b)^{\alpha+n+1}}\Bigg(
(-1)^{n+1}(a-b)^{n+1}\frac{\Gamma(\alpha+n+1)}{\Gamma(\alpha)}+\\
\sum\limits_{\ell=1}^{n+1}(-1)^{n+1+\ell}(a-b)^{n+1-\ell}
\binom{n+1}{\ell}\frac{\Gamma(\alpha+n+1)}{\Gamma(\alpha+\ell)}
\prod\limits_{m=1}^{\ell}(m-1)\Bigg).
\end{multline*}

In the latter expression, the terms corresponding to $\ell=1,\ldots,n+1,$ clearly vanish and then
it reduces to
\[
(-1)^{n+1}(a-b)^{n+1}\frac{\Gamma(\alpha+n+1)}{\Gamma(\alpha)}(x+a)^{n}
\frac{(x+a)^{\alpha-(n+1)}}{(x+b)^{\alpha+n+1}}.
\]
Hence the statement is proved.

\end{proof}

\begin{proof}[Proof of Theorem~\ref{thm:1.2}] Set $n_0=\deg(P^0).$
Differentiating $F(x)$ in \eqref{eq:5} $n_0+1$ times, applying
Lemma~\ref{le:3.1} and dividing by
$(x+c_1)^{-\frac{1}{2}-(n_0+1)}$ we have
\begin{equation*}
F_1(x)=P_{n}^{1,1}(x)+ \sum\limits_{j=2}^K P_{n}^{j,1}(x)
\left(\frac{x+c_j}{x+c_1}\right)^{\alpha_1},
\end{equation*}
where $P_{n}^{j,1}(x)$ are suitable polynomials of degree at most
$n=\max(\deg{P^1},\ldots, \deg{P^K})$ and
$\alpha_1=-\frac{1}{2}-(n_0+1).$ Differentiating $F_1(x),$ $n+1$ times,
applying Lemma~\ref{le:3.2} and dividing by
$(x+c_2)^{\alpha_1-(n+1)}/(x+c_1)^{\alpha_1+(n+1)}$ we have
\begin{equation*}
F_2(x)=P_{n}^{2,2}(x)+ \sum\limits_{j=3}^K P_{n}^{j,2}(x)
\left(\frac{x+c_j}{x+c_2}\right)^{\alpha_2},
\end{equation*}
with $P_{n}^{2,j}(x)$ polynomials of degree $n$ and
$\alpha_2=\alpha_1-(n+1).$  Performing the same procedure $K-2$
times we reach
\begin{equation*}
F_K(x)=P_n^{K,K}(x).
\end{equation*}
Note that all the polynomials $P_n^{j,\ell}(x)$ appearing in the
process have degree $n.$ Since the total number of derivatives is
$(K-1)(n+1)+n_0+1$ and the degree of the last polynomial is $n$ the
total number of zeroes of $F(x)$ is bounded by $K(n+1)+n_0,$ as we
wanted to prove.
\end{proof}

\section{Proof of Theorem~\ref{thm:1.1}}\label{se:4}

We start studying the case $K_2=0.$ From the expression of
Proposition~\ref{pr:2.3} and applying
Theorem~\ref{thm:1.2} we know that the number of zeroes of
$I(r)$ is less or equal than
\[
\widetilde{K}_1 \left(\left[\frac{n-1}{2} \right]+2 \right) + \left[
\frac{n}{2} \right]=\widetilde{K}_1 \left[ \frac{n+3}{2} \right]+
\left[ \frac{n}{2} \right].
\]
as we wanted to see.

Consider now $K_2\ge1.$ We know that the set
$\{\a_j^2+\b_\ell^2,\,{j=1,\ldots,K_1,\, \ell=1,\ldots,K_2}\}$
has $L$ elements. Define the function
\[
H(r^2):=\prod_{i=1}^{L} \left(\a_{j_i}^2+\b_{\ell_i}^2-r^2\right),
\]
using all the different elements of the set.  From
Proposition~\ref{pr:2.4} and multiplying $I(r)$ by
$H(r^2)$ we obtain that
\begin{align*}
H(r^2) I(r)= &\sum\limits_{j=1}^{K_1}\left(\sum\limits_{k=1}^{K_2}
\frac{H(r^2)}{\a_j^2+\b_k^2-r^2}U^{j,k}_{[n/2]+1}(r^2)\right)\frac{1}
{\sqrt{\a_j^2-r^2}}+\\
&\sum\limits_{k=1}^{K_2}\left(\sum\limits_{j=1}^{K_1}
\frac{H(r^2)}{\a_j^2+\b_k^2-r^2}V^{j,k}_{[n/2]+1}(r^2)\right)
\frac{1}{\sqrt{\b_k^2-r^2}}+H(r^2)W_{[(n-1)/2]}(r^2)=\\
&\sum\limits_{j=1}^{K_1}\widehat U^j_{[n/2]+L}(r^2)\frac{1}
{\sqrt{\a_j^2-r^2}}+\sum\limits_{k=1}^{K_2} \widehat
V^k_{[n/2]+L}(r^2)\frac{1}{\sqrt{\b_k^2-r^2}}+\widehat W_{[(n-1)/2]+L}(r^2)=\\
&\sum\limits_{j=1}^{\widetilde {K}_1}\widehat
S^j_{[n/2]+L}(r^2)\frac{1}{\sqrt{\widetilde{\a}_j^2-r^2}}+\sum\limits_{k=1}^{\widetilde
{K}_2} \widehat T^k_{[n/2]+L}(r^2)\frac{1}{\sqrt{\widetilde
{\b}_k^2-r^2}}+\widehat W_{[(n-1)/2]+L}(r^2),
\end{align*}
where  the polynomials $\widehat{S}_m^j(r^2),$ $\widehat{T}_m^k(r^2),$
$\widehat{U}_m^j(r^2),$ $\widehat{V}_m^k(r^2)$ and $\widehat{W}_m(r^2)$ have
degree $m$ in $r^2.$ In the last step
$\widetilde{\a}_1,\ldots,\widetilde{\a}_{\widetilde{K}_1},$ denote
the different values of the set $\{|\a_1|,\ldots,|\a_{K_1}|\}$ and
$\widetilde{\b}_1,\ldots,\widetilde{\b}_{\widetilde{K}_2},$ denote
the different values of the set
$\{|\b_1|,\ldots,|\b_{K_2}|\}\setminus
\{|\a_1|,\ldots,|\a_{K_1}|\}.$ Remember that
\[  \widetilde{K}_1=\card\{|\a_1|,\ldots,|\a_{K_1}|\} \quad
\mbox{and}\quad \widetilde{K}_2=\card\{|\a_1|,\ldots,|
\a_{K_1}|,|\b_1|,\ldots,|\b_{K_2}|\}-\widetilde{K}_1.
\]

Applying Theorem~\ref{thm:1.2} to the right hand part of the
last equality we have the expression of the statement because the
number of zeroes of $I(r)$  in $(0,\min\{|\a_j|,|\b_{\ell}|\})$
satisfies
\begin{align*}
\mathcal{Z}(I)\leq
(\widetilde{K}_1+\widetilde{K}_2)\left([n/2]+L+1\right)+[(n-1)/2]+L.
\end{align*}
\qed

\begin{rem}\label{re:4.1}
The upper bounds given in Theorem~\ref{thm:1.1}
are maximal when all the straight lines are ``generically'' located,
that is, when $\widetilde{K}_1=K_1,$  $\widetilde{K}_2=K_2$ and
$L=K_1K_2$ hold.
\end{rem}

Denote by $K$ the total number of straight lines (parallel or
orthogonal one-to-one) that we consider in our system. Then
$K=K_1+K_2.$ Therefore, the maximal value for the upper bound for
the number of zeroes of the Abelian integral $I(r)$ is achieved for
$K_1=K_2=K/2$ if $K$ even and $K_1=(K+1)/2,$ $K_2=(K-1)/2$ if $K$
odd. We have the following result:

\begin{cor}\label{co:4.2}
Under the  hypotheses of Theorem~\ref{thm:1.1} and denoting by
$K=K_1+K_2$ the total of number of straight lines of singular
points, we have that the number of zeroes of $I(r)$ satisfies
\[
\mathcal{Z}(I) \leq
K\left[\frac{n}{2}\right]+\left[\frac{n-1}{2}\right]+(K+1)
\left[\frac{K}{2}\right]\left[\frac{K+1}{2}\right]+K.
\]
\end{cor}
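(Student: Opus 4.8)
The plan is to read Corollary~\ref{co:4.2} as the maximum, over all admissible line configurations with a prescribed total $K=K_1+K_2$, of the bound furnished by Theorem~\ref{thm:1.1}. First I would invoke Remark~\ref{re:4.1}: for a fixed pair $(K_1,K_2)$ the right-hand side of Theorem~\ref{thm:1.1} is largest in the generic situation $\widetilde{K}_1=K_1$, $\widetilde{K}_2=K_2$, $L=K_1K_2$. For $K\ge 2$ the balanced splitting announced before the statement satisfies $K_2\ge 1$, so I would work with the second branch of Theorem~\ref{thm:1.1}; in the generic case, and using $[(n+2)/2]=[n/2]+1$, it becomes
\[
B(K_1,K_2)=(K_1+K_2)\Bigl(\Bigl[\tfrac{n}{2}\Bigr]+K_1K_2+1\Bigr)+\Bigl[\tfrac{n-1}{2}\Bigr]+K_1K_2.
\]

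Next I would fix the total $K=K_1+K_2$ and regroup $B$ so as to isolate the dependence on the configuration. Expanding gives
\[
B=K\Bigl[\tfrac{n}{2}\Bigr]+\Bigl[\tfrac{n-1}{2}\Bigr]+K+(K+1)\,K_1K_2,
\]
so the only term that varies with the splitting is $(K+1)\,K_1K_2$. Since $K+1>0$, the bound is strictly increasing in the product $K_1K_2$, and maximizing it over all splittings reduces to the elementary problem of maximizing $K_1K_2$ subject to $K_1+K_2=K$.

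The product of two nonnegative integers with fixed sum $K$ is maximal when they are as balanced as possible, that is, for $K_1=K_2=K/2$ if $K$ is even and for $K_1=(K+1)/2$, $K_2=(K-1)/2$ if $K$ is odd; in both cases the maximal product equals $\bigl[\tfrac{K}{2}\bigr]\bigl[\tfrac{K+1}{2}\bigr]$. Substituting $K_1K_2=[K/2][(K+1)/2]$ into the last display yields precisely the bound claimed in the corollary. To complete the argument I would also compare with the one configuration not of the above type, namely the all-parallel case $K_2=0$, whose bound $K[(n+3)/2]+[n/2]$ comes from the first branch of Theorem~\ref{thm:1.1}; for $K\ge 2$ this is dominated by $B$, since the latter contains the term $(K+1)[K/2][(K+1)/2]$, which is cubic in $K$ and overwhelms the differences, linear in $K$, between the remaining parts.

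I expect no serious obstacle here: the substantive analysis is already contained in Theorem~\ref{thm:1.1} and Remark~\ref{re:4.1}, and what remains is purely the optimization of a product under a sum constraint together with some bookkeeping of integer parts. The only point requiring a little care is the final comparison with the all-parallel configuration and the handling of the floor functions, both of which reduce to elementary integer estimates.
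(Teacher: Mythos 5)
Your proposal is correct and is essentially the paper's own argument: the paper's (implicit) proof of Corollary~\ref{co:4.2} consists precisely of Remark~\ref{re:4.1} together with regrouping the generic second-branch bound as $K\left[\frac{n}{2}\right]+\left[\frac{n-1}{2}\right]+K+(K+1)K_1K_2$ and maximizing $K_1K_2$ under $K_1+K_2=K$ at the balanced splitting, with maximal value $\left[\frac{K}{2}\right]\left[\frac{K+1}{2}\right]$; if anything you are more thorough, since the paper never explicitly verifies that the all-parallel configuration $K_2=0$ is dominated for $K\ge 2$. The one caveat, which your proof shares with (and inherits from) the paper rather than creates, is the edge case $K=1$: there the only admissible configuration is $(K_1,K_2)=(1,0)$, for which Theorem~\ref{thm:1.1} yields $\left[\frac{n+3}{2}\right]+\left[\frac{n}{2}\right]$, exceeding the corollary's claimed bound by one unit, so for $K=1$ the stated inequality does not actually follow from Theorem~\ref{thm:1.1} by either argument.
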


Although in this work we do not consider the problem of the sharpness of the
upper bounds for the Abelian integral $I(r)$ given in
Theorem~\ref{thm:1.1} we end this section with
several comments about this question. Fixed some $K_1$ and $K_2,$
first let us count how many effective free parameters gives the
perturbation $(P,Q)$ in the expression of $I(r).$ In principle
these two polynomials of degree $n$ have both together
$(n+2)(n+1)$ parameters, but  some symmetry considerations based on the first expression of $I(r),$ reduce this number in the computation of $I(r)$ to
$(n+4)(n+1)/2$ when $K_1K_2\ne 0$ and to $[(n+5)(n+1)/4]$ when
$K_1K_2=0.$ On the other hand the growth of the upper bound for
$\mathcal{Z}(I)$ is linear in $n.$ This fact makes natural to
believe that, fixed $K_1$ and $K_2$, the upper bounds for $n$ large
enough are essentially reached.

Besides, when we restrict our study to families of
polynomial vector fields  of degree $n$ it is natural to take
$K=n-1.$ Thus, applying the previous results, the upper bound for
the number of zeroes of the corresponding Abelian integral $I(r)$
 grows as $n^3/4,$ when $K_1K_2\ne0,$ while the number of free
parameters only grows as $n^2/2.$  This difference in growth
implies, for $n$ large enough, the existence of a collection of
relations between the coefficients of the functions  $P^j(x)$ that
appear when we apply Theorem~\ref{thm:1.2} to the final
expression of $I(r).$ Therefore, in this situation, it can not be
expected at all the sharpness of the upper bound.  When $K_1K_2=0$
something similar happens.

\section{Relations with some previous works}\label{se:5}

There are some previous works where the results of this paper can be
applied. In some cases we can give an alternative proof of the
corresponding upper bound; in the other cases the upper bound is
improved. We remark that in some of these works the sharpness of the
obtained upper bounds is also studied. In  our paper, this
question has not been considered.

In \cite{LliPerRod2001} the authors deal with the case of a
single straight line of critical points, $G(x,y)=x-1.$ In this case
they proved that $\mathcal{Z}(I)\le n$ and that the bound is sharp.
Our result gives an alternative proof for the upper bound.

The case with two straight lines of singularities was studied in
\cite{XiaHan2004b} and \cite{BuiLli2007}. The first one considered
the case of both lines been parallel and the latter when they are
perpendicular. In the first paper the authors proved that the number
of zeroes satisfies
\[
\mathcal{Z}(I)\leq \begin{cases}2n+3, & n \textrm{ is odd,} \\
2n+1, &n \textrm{ is even,}  \end{cases}
\]
while the upper bound provided by Theorem~\ref{thm:1.1} is sharper:
\[
\mathcal{Z}(I)\leq \begin{cases}\frac{3n+5}{2}, & n \textrm{ is
odd,}
\\  \frac{3n+4}{2}, &n \textrm{ is even.}  \end{cases}
\]
In the second paper, it was proved that
\[
\begin{cases}\frac{3n+1}{2}, & n \textrm{ is odd,} \\
\frac{3n-2}{2}, &n \textrm{ is even,}  \end{cases} \leq
\mathcal{Z}(I)\leq \begin{cases} \frac{3n+5}{2}, & n \textrm{ is
odd,} \\  \frac{3n+2}{2}, &n \textrm{ is even.}  \end{cases}
\]
Using Theorem~\ref{thm:1.1} we get that
\[
\mathcal{Z}(I)\leq \begin{cases}\frac{3n+7}{2}, & n \textrm{ is
odd,}
\\  \frac{3n+8}{2}, &n \textrm{ is even.}  \end{cases}
\]
The difference can be explained because the perturbation in
\cite{BuiLli2007} satisfies $P_{n}(0,0)=Q_{n}(0,0)=0$ while our
study has not this restriction.

In \cite{ColLliPro2010} the authors considered three lines, two
parallel and one perpendicular. They proved that
\[
\begin{cases}2n+2, & n \textrm{ is odd,} \\  2n, &n \textrm{ is even,}
\end{cases} \leq \mathcal{Z}(I)\leq \begin{cases}\frac{5n+23}{2}, & n \textrm{ is odd,}
\\  \frac{5n+18}{2}, &n \textrm{ is even.}  \end{cases}
\]
Using Theorem~\ref{thm:1.1}, with
$(K_1,K_2)=(2,1),$ the last inequalities can be improved to
\[
\mathcal{Z}(I)\leq \begin{cases}2n+12, & n \textrm{ is odd,} \\
2n+13, &n \textrm{ is even.}  \end{cases}
\]
The  special case with four lines given by
$G(x,y)=(x^2-a^2)(y^2-b^2)$ was studied in \cite{AtaNyaZan2009}. When
$a\ne b$  the authors showed that
\[
\mathcal{Z}(I)\leq \begin{cases}\frac{3n+5}{2}, & n \textrm{ is
odd,}
\\  \frac{3n+2}{2}, &n \textrm{ is even,}  \end{cases}
\]
and it was claimed that these bounds are sharp. The upper bound given by
Theorem~\ref{thm:1.1} increases in one unit the
previous ones. In our notation  $\a_1=-\a_2=a$ and $\b_1=-\b_2=b,$
and so $K_1=K_2=2$ but $\widetilde K_1=\widetilde K_2=L=1.$ In the
generic case ($K_1=K_2=\widetilde{K}_1=\widetilde{K}_2=L=2$) we get
an upper bound that grows like $5n/2.$

\end{document}